\documentclass[a4paper]{amsart}
\usepackage[final,theoremdefs]{allergy}

\usepackage{amsaddr}


\usepackage{enumitem}

\usepackage{tikz}
\usetikzlibrary{decorations.pathreplacing,shapes,arrows,positioning}
\tikzset{
	vertex/.style = {
		circle,
		fill            = black,
		outer sep = 2pt,
		inner sep = 1pt,
	}
}
\usetikzlibrary{spy}
\usetikzlibrary{matrix}

\usepackage{algpseudocode}
\usepackage{algorithm}

\usepackage{siunitx} 

\usepackage{pdfsync}
\graphicspath{{fig/}}

\usepackage[labelformat=simple]{subcaption}

\usepackage{pdfsync}
\usepackage{bbm}

\usepackage{enumitem}
\usepackage[T1]{fontenc} 

\newcommand*\RR{\mathbb{R}}

\newcommand*\NN{\mathbb{N}}

\newcommand {\MM} { {\mathcal{M}} }

\newcommand*\one{\boldsymbol{1}}

\newcommand*\loss{\ell}

\renewcommand{\geq}{\geqslant}
\renewcommand{\leq}{\leqslant}

\renewcommand{\geq}{\geqslant}
\renewcommand{\leq}{\leqslant}

\newcommand {\Chi} {{\bf \raise 2pt \hbox{$\chi$}} }

\newcommand*\Cont{\mathcal{C}}
\newcommand*\ndet{m}

\newcommand{\scl}[2]{\pairing*{#1}{#2}}

\newcommand*\dd{\mathrm{d}}

\newcommand{\beq}{\begin{equation}}
\newcommand{\eeq}{\end{equation}}
\newcommand{\bea} {\begin{array}{rl}}
\newcommand{\eea} {\end{array}}
\newcommand{\bepa}{\left\{ \begin{array}{l}}
\newcommand{\eepa} {\end{array}\right.}
\newcommand{\bmu}{\begin{multline}}
\newcommand{\emu}{\end{multline}}
\DeclareMathOperator*{\argmin}{arg\,min}


\title{Statistical model and ML-EM algorithm for emission tomography with known movement}

\date{\today}
\author{Camille Pouchol}
\address{MAP5 Laboratory, FP2M, CNRS FR 2036, Universit\'e de Paris, 75006 Paris, France.}

\author{Olivier Verdier}
\address{Department of Computing, Electrical Engineering and Mathematical Sciences, Western Norway University of Applied Sciences, Bergen, Norway.}

\begin{document}

\newcounter{assum}

\begin{abstract}
  In Positron Emission Tomography (PET), movement leads to blurry reconstructions when not accounted for.
  Whether known a priori or estimated jointly to reconstruction, motion models are increasingly defined in continuum rather that in discrete, for example by means of diffeomorphisms.
  The present work provides both a statistical and functional analytic framework suitable for handling such models.
  It is based on time-space Poisson point processes as well as regarding images as measures, and allows to compute the maximum likelihood problem for line-of-response data with a known movement model. 
  Solving the resulting optimisation problem, we derive an Maximum Likelihood Expectation Maximisation (ML-EM) type algorithm which recovers the classical ML-EM algorithm as a particular case for a static phantom.
  The algorithm is proved to be monotone and convergent in the low-noise regime.
  Simulations confirm that it correctly removes the blur that would have occurred if movement were neglected.

\end{abstract}

\maketitle

\section{Introduction}

In Positron Emission Tomography (PET), line-of-response data consists of the times of simultaneous detections of two photons, in each of the $\ndet$ pairs of detectors, from which one aims at reconstructing the underlying image $\mu \in X$, for some suitable space of images $X$.
When the phantom is static, the times are grouped into numbers of detections $y_i$ per detector $i \in \set{1, \ldots, \ndet}$.
A good statistical model for the problem is then $y = \mathcal{P}(A \mu)$, {i.e.}, the data is obtained as $\ndet$ independent Poisson random variables $y_i$ of mean $(A \mu)_i$ where $A \colon X \to \mathbb{R}^\ndet$ is a known operator modelling the scanner geometry. 

This inverse problem is in practice solved through variants of the iterative Maximum Likelihood Expectation Maximisation (ML-EM) algorithm \[\mu_{k+1} = \frac{\mu_k}{A^T 1} A^T\Big(\frac{y}{A \mu_k}\Big),\] which is aimed at maximising the likelihood associated to the above statistical model, {i.e.}, at minimising $\loss(\mu) := d(y || A \mu)$ over $\mu \in X$, $\mu \geq 0$, where $d$ is the Kullback--Leibler divergence~\cite{Shepp1982, Vardi1985, Verdier2020}. 

Reconstruction methods in medical imaging suffer from blurring effects if the phantom moves during acquisition time, unless movement is taken into account in the reconstruction process.
Cardiac or thoracic PET scans are typical instances of this problem.

\emph{Motion estimation} refers to methods which take movement into account by estimating it.
This can be done either prior to reconstruction, or jointly with it, see~\cite{Gigengack2015, Dawood2008, Rahmim2013, Reader2014} for a review.
A first class of methods is based on a discrete parameterisation of movement, such as~\cite{Jacobson2003, Jacobson2006}.
Some approaches instead rely on the situation where a physical device allows to group counts per phase (called \textit{gates}) in which the movement can be assumed to be stationary. For instance~\cite{chan18_non_rigid_event_by_event} performs motion correction in each gate. 
Hence, the model is discrete in space, and also in time (because of the gating), although motion estimation is done within the gates.
A related approach is \cite{qiao06_motion_incor_recon_method_gated_pet_studies}, in which the motion is measured with a CT scanner during the acquisition.

However, models with a continuous description of movement, typically by means of diffeomorphisms, are gaining popularity in the context of PET~\cite{Blume2010, Hinkle2012}, and more broadly in imaging sciences~\cite{Younes2010}. 
Considering $X$ as a functional space of functions defined over a compact $K \subset \mathbb{R}^p$, these approaches assume that the activity
is modified by operators $\mathcal W_t \colon X \rightarrow X$.

If we assume as is common that \(\mathcal{W}_t\) is defined via a diffeomorphism for any \(t\in[0,1]\),
as diffeomorphisms do not preserve grids, the latter approach does not easily lend itself to discretisation (in which case $X$ becomes a finite-dimensional space). For instance, both \cite{qiao06_motion_incor_recon_method_gated_pet_studies} and \cite{chan18_non_rigid_event_by_event} have to explicitly resort to interpolation.
We also note that none of the papers \cite{chan18_non_rigid_event_by_event,Jacobson2003, qiao06_motion_incor_recon_method_gated_pet_studies} carry out a theoretical study as ours.

\subsection*{Contributions.}
The aim of the present paper is twofold.
\begin{itemize}
\item First, building up on~\cite{Verdier2020}, we introduce a continuous mathematical framework which incorporates any movement model, and whereby the maximum likelihood problem associated to the times of detections may be derived.
\item Second, based on this construction, we propose and analyse an ML-EM type algorithm for the maximum likelihood problem associated to a known continuous movement model. 
\end{itemize}

When the movement is in fact static, we recover the classical ML-EM algorithm as a particular case.
Even in the static case, optimal solutions to the maximum likelihood problem can be singular measures (sums of point masses), and it is the standard outcome in the noisy case~\cite{Mair1996,Verdier2020}.
We thus use measures to model images in this continuous context.

We also emphasise that in the case of \emph{gated} data, an ML-EM algorithm has been derived informally in the literature~\cite{qiao06_motion_incor_recon_method_gated_pet_studies, Hinkle2012, Li2019, Oktem2019}.
We recover that algorithm as well when the $\mathcal W_t$ are assumed to be piecewise constant in time.

The proposed approach applies to cases where the movement model is known.
This can be for scans of phantoms where movement is controlled, or when the motion model is estimated prior to reconstruction.
An example is provided in \autoref{translation_evolution} and \autoref{translation_comp}; see \autoref{sec:translation} for more details.

\begin{figure}
  \centering
\captionsetup{size=small}
\begin{subfigure}{0.32\textwidth}
\includegraphics[width=\linewidth]{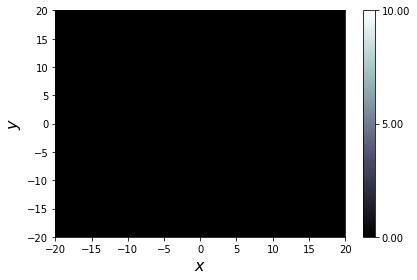} 
\caption{$t = 0$} 
\end{subfigure} 
\hfill
\begin{subfigure}{0.32\textwidth}
\includegraphics[width=\linewidth]{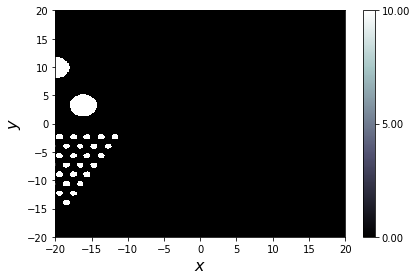} 
\caption{$t = 0.2$} 
\end{subfigure} 
\hfill
\begin{subfigure}{0.32\textwidth}
\includegraphics[width=\linewidth]{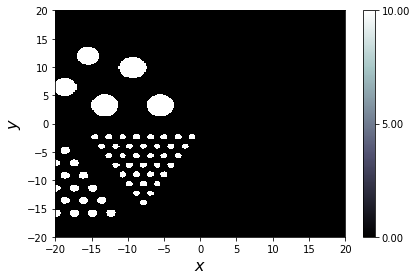} 
\caption{$t = 0.4$}
\end{subfigure}

\bigskip

\begin{subfigure}{0.32\textwidth}
\includegraphics[width=\linewidth]{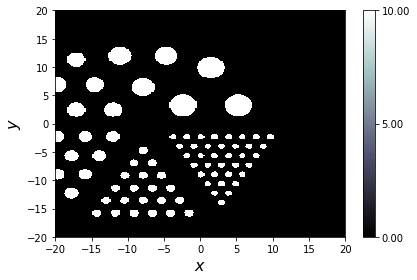} 
\caption{$t = 0.6$} 
\end{subfigure} 
\hfill
\begin{subfigure}{0.32\textwidth}
\includegraphics[width=\linewidth]{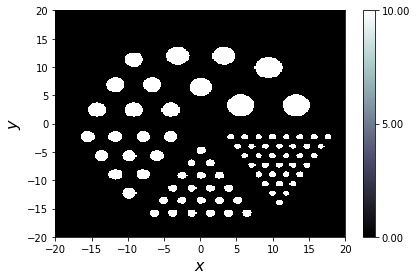} 
\caption{$t = 0.8$} 
\end{subfigure} 
\hfill
\begin{subfigure}{0.32\textwidth}
\includegraphics[width=\linewidth]{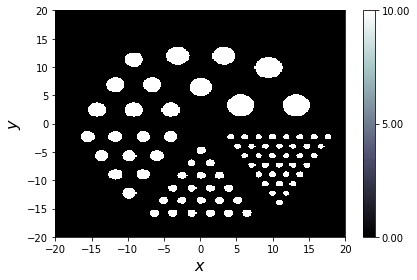} 
\caption{$t = 1$} 
\end{subfigure} 

\caption{Evolution of a template evolution with translation operators.} \label{translation_evolution}
\end{figure}

\begin{figure}

\begin{subfigure}{0.49\textwidth}
\includegraphics[width=\linewidth]{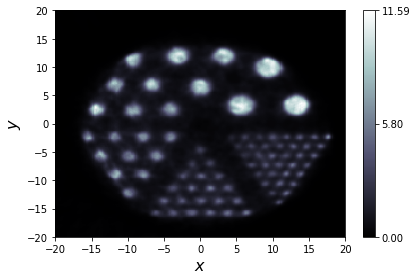} 
\caption{Proposed ML-EM algorithm} \label{trans:proposed}
\end{subfigure} 
\hfill
\begin{subfigure}{0.49\textwidth}
\includegraphics[width=\linewidth]{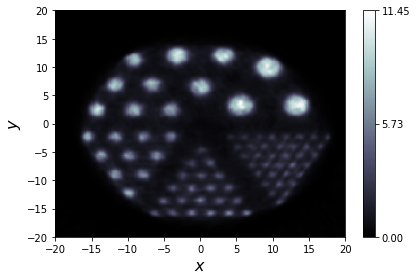} 
\caption{Classical ML-EM on static data} \label{trans:classical}
\end{subfigure} 

\bigskip
\begin{subfigure}{0.49\textwidth}
\includegraphics[width=\linewidth]{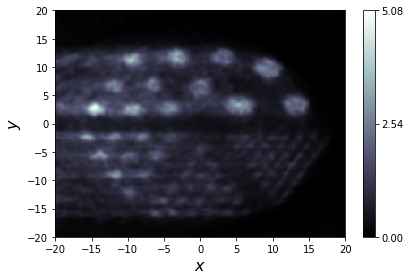} 
\caption{Classical ML-EM on full aggregated data  \quad} \label{trans:aggregated}
\end{subfigure} 
\hfill
\begin{subfigure}{0.49\textwidth}
\includegraphics[width=\linewidth]{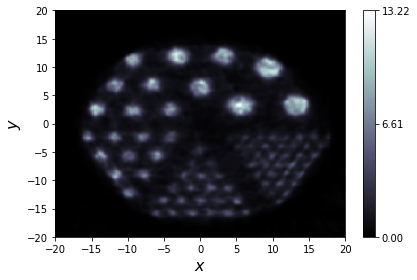} 
\caption{Classical ML-EM on partial aggregated data on $[\frac{3}{4},1]$, scaled} \label{trans:partial}
\end{subfigure} 
\caption{$10$th iterate of the proposed algorithm \eqref{mlem} compared to the $10$th iterate of the classical ML-EM algorithm in various cases.} \label{translation_comp}
\end{figure}



Our framework can also serve as a building block for joint motion estimation and reconstruction, when the movement model is unknown.
A typical strategy to solve the resulting optimisation problem is use alternating minimisation, successively  estimating the image and the transformation (typically a diffeomorphism)~\cite{Hinkle2012, Li2019, Oktem2019}.
Thus, given a current estimate of the transformations $\mathcal W_t$, 
the next iterate for the image is obtained by maximising
the corresponding likelihood,
which is
exactly
what our algorithm does.

\subsection*{Outline of the paper}
The paper is organised as follows.
In \autoref{sec2}, we introduce the notations as well as the modelling through time-space Poisson point processes.
We derive the likelihood associated to the times of detections for this statistical model.
Finally, we provide the corresponding optimality conditions.
The ML-EM algorithm is introduced in \autoref{sec3}, where we prove its monotonicity and analyse its convergence properties.
In \autoref{sec4}, numerical simulations are presented, showing that our algorithm performs like the classical ML-EM, and avoids blurring effects when compared to neglecting motion. 
In \autoref{sec5}, we conclude by discussing the reach and limits of our approach, together with some perspectives.

\section{Maximum likelihood function}
\label{sec2}
\subsection{Notations} 

For a given compact $E \subset \mathbb{R}^d$, we denote $\MM(E)$ the set of Radon measures defined on $E$, {i.e.}, the topological dual of continuous functions $\mathcal{C}(E)$ over~$E$. Endowing $\MM(E)$ with its weak-$\ast$ topology, its dual is given by $\mathcal{C}(E)$.
We denote by \(\scl{\mu}{f}\) the pairing of a measure $\mu \in \MM_+(E)$ and a function $f \in \mathcal{C}(E)$. 

The set of nonnegative measures will be denoted $\MM_+(E)$.
By the Riesz--Markov representation theorem, we may also regard a measure $\mu \in \MM_+(E)$ as a Borel measure, and we will sometimes write $\mu(B)$ for the measure of a measurable set $B \subset E$.

For $\mu, \nu$ two measures in $\MM(E)$, the notation $\mu \ll \nu$ means that $\mu$ is absolutely continuous with respect to $\nu$.

An operator $\mathcal{W} \colon \mathcal{M}(E) \rightarrow \mathcal{M}(E)$ {i.e.}, a linear and continuous mapping in the weak-$\ast$ topology, is defined through its adjoint $\mathcal{W}^\ast \colon \mathcal{C}(E) \rightarrow \mathcal{C}(E)$ by
\[\scl{\mathcal{W} \mu}{f} :=\scl{\mu}{ \mathcal{W}^\ast f}\] for any $f \in \mathcal{C}(E)$. 

Note that the adjoint of an operator of $\mathcal{M}(E)$ is well-defined as a mapping from $\mathcal{C}(E)$ onto itself, meaning that such an operator indeed may be defined through its adjoint, see~\cite[Proposition 3.14]{Brezis2010}.

\subsection{Modelling} 
We quickly recall the physics underlying Positron Emission Tomography. A radiotracer injected into the patient diffuses into tissues and then disintegrates by emitting positrons. A given positron, after a very short travel distance, interacts with an electron, yielding two photons emitted in (uniformly) random opposite directions. Such photons are then detected simultaneously by a pair of detectors.

We are given a compact $K \subset \mathbb{R}^p$ (having $p=2$, $p=3$ in mind for the application) on which the image is defined. We assume that the emission process is defined by a time-space Poisson point process. The intensity of the latter is a measure in $\mathcal{M}_+([0,1] \times K)$, where, without loss of generality,  we fix the final time to one.

We assume that the intensity at time \(t\)
is given by a known linear transformation of the unknown measure $\mu \in \MM_+(K)$.
In other words,
the activity at time \(t\) is the measure $\mathcal{W}_t  \, \mu$,
where the linear operators
\[\mathcal{W}_t \colon \mathcal{M}(K) \rightarrow \mathcal{M}(K), \qquad t \in [0,1]
  ,
\]
are known.

Hence, we define the measure underlying the time-space Poisson process by
\[ [t_1, t_2] \times B \longmapsto \int_{t_1}^{t_2} (\mathcal{W}_t \mu)(B) \, dt, \qquad 0 \leq t_1 < t_2\leq 1,\; B \subset K \text{ Borel set},\]
which we denote $\mathcal{W}_t \mu$ with a slight abuse of notation.
This measure is well-defined under the assumption~\eqref{Integrable}, see the full set of assumptions in the next subsection.

Independently of the emission process associated, a point emitted at $x \in K$ and time $t \in [0,1]$ then has a relative probability $a_i(x)$ to be detected in detector \(i\), and we thus assume 
\begin{equation}
  \label{eq:posdet}
a_i \in \mathcal{C}(K), \qquad a_i \geq 0, \qquad i = 1, \ldots, \ndet.
\end{equation}
We assume that the detection also occurs at time $t$, which is an excellent approximation.


We now define the PET operator $A \colon \MM(K) \rightarrow \mathbb{R}^\ndet$ by \[(A \mu)_i :=   \scl{\mu}{a_i} = \int_K a_i \, \dd\mu , \quad i= 1,\ldots, \ndet.\]
Note that the adjoint $A^*: \mathbb{R}^\ndet \rightarrow \mathcal{C}(K)$ of $A$ is given by
\begin{equation*}
 A^* \lambda = \sum_{i=1}^\ndet \lambda_i a_i, \qquad \lambda \in \ \mathbb{R}^\ndet.
\end{equation*}

\subsection{Assumptions on the Transformations}

We make the following assumptions on the transformations \(\mathcal{W}_t\), for \(t\in[0,1]\).
\begin{description}
\item[Nonnegativity Preservation] 
  \begin{equation}
    \label{Preservation}
    \mu \geq 0 \implies \mathcal{W}_t \mu \geq 0, \qquad
    t \in [0,1]
  \end{equation}
  \item[Integrability]
   denoting $1$ for the constant function in $\mathcal{C}(K)$, we assume
    \begin{equation}
      \label{Integrable}
      \int_{[0,1] \times K} \mathcal{W}_t^* 1 \; < +\infty.
    \end{equation}
\end{description}
Denoting $\boldsymbol{1} = (1, \ldots, 1) \in \mathbb{R}^\ndet$, the condition \eqref{Integrable} allows us to define the following integral:
\begin{equation}
  f \coloneqq \int_0^1 \mathcal{W}_t^* A^* \one\, \dd t
\end{equation}
A consequence of \eqref{Preservation} and \eqref{eq:posdet} is that \(f \geq 0\).
We make the further blanket assumption that \(f \neq 0\), that is (since \(K\) is compact)
\begin{equation}
  \label{Positive}
  \exists c > 0,\qquad
  f(x)  \geq c 
  \qquad
  x \in K
  .
\end{equation}


The assumption~\eqref{Preservation} equivalently writes \(g \geq 0 \implies \mathcal{W}_t^\ast g \geq 0\) for any $t \in [0,1]$, $g \in \mathcal{C}(K)$, $g\geq 0$. It means that a transformation \(\mathcal{W}_t\) cannot create illegal (negative) activity.
The assumption~\eqref{Integrable} essentially ensures that there is a finite activity over time.
The blanket assumption~\eqref{Positive}
is natural:
if \(f(x)= 0\), then \(\mathcal{W}_t^{\star} a_i(x) = 0\) for all detectors~\(i\) and times \(t\), which means that nothing can ever be measured at the point $x$, and it makes no sense to try and estimate the image there. 

\subsection{Maximum likelihood}

 We assume that there are $n_i$ detections in a given detector $i \in \set{1, \dotsc, \ndet}$, detected at times $t_j^i$, $j=1, \ldots, n_i$. We do not require that the times be ordered. We also denote \[n := \sum_{i=1}^\ndet n_i,\] the total number of detections.
 
 Our aim is to derive the likelihood for the problem of estimating $\mu$ from the data given by the number of points and times of detections, namely
 \[n_i, \quad t_j^i, \quad j = 1, \ldots, n_i, \; i = 1, \ldots, \ndet.\]
 
We introduce a couple of additional notations. First, we define for a time $t \in [0,1]$ and the measure $\mu \in \MM_+(K)$ the function 
\[
  \beta_i(t) \coloneqq (A \mathcal{W}_t \mu)_i, \qquad 
  i = 1, \ldots, \ndet
  ,
\]
or equivalently, \(\beta_i(t) = \scl{\mathcal W_t \mu}{a_i}\).
We also define a collection \(\Gamma\) of $n$ continuous functions by
\[
  \Gamma :=  \setc{\mathcal{W}_{t_j^i}^\ast a_i}{i = 1, \ldots, \ndet, \; j = 1, \ldots, n_i}
  .
\]
We first identify the point process from which the data is drawn. 
\begin{proposition}
\label{Inhomogeneous}
For $i \in \set{1, \dotsc, \ndet}$, the number of points $n_i$ and times $t_j^i$ are drawn from independent inhomogeneous (in time) Poisson processes on $[0,1]$, with respective intensities given by the functions $\beta_i$.
\end{proposition}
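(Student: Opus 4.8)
The plan is to derive the law of the detection data as the image of the emission Poisson point process under the "detection" marking/thinning, and then read off the claimed structure. First I would set up the underlying object: by assumption the emissions form a Poisson point process on $[0,1]\times K$ with intensity measure $\nu(\dd t,\dd x)$ defined by $[t_1,t_2]\times B \mapsto \int_{t_1}^{t_2}(\mathcal W_t\mu)(B)\,\dd t$, which is finite and well-defined by \eqref{Integrable}. A realisation of this process is a finite collection of emission events $(s_k, x_k)$. Each such event is independently assigned a detector label: it goes to detector $i$ with probability $a_i(x_k)$ (these are "relative" probabilities, so with probability $1-\sum_i a_i(x_k)$ it is not detected at all, which is just a thinning). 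The data $(n_i, t^i_j)$ is exactly the projection onto the time coordinate of the events that received label $i$.

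Next I would invoke the marking theorem for Poisson point processes: independently marking each point of a Poisson process produces a Poisson process on the product of the state space and the mark space, with intensity the product of the original intensity and the (point-dependent) mark kernel. Concretely, the labelled-and-retained events form a Poisson point process on $[0,1]\times K\times\{1,\dots,\ndet\}$ with intensity $a_i(x)\,\nu(\dd t,\dd x)$ on the sheet labelled $i$. By the restriction/superposition property, the sheets are mutually independent Poisson processes, and projecting sheet $i$ onto its time coordinate (the pushforward under $(t,x,i)\mapsto t$) again yields a Poisson process on $[0,1]$, whose intensity is the pushforward measure
\[
  B \longmapsto \int_{B\times K} a_i(x)\,\nu(\dd t,\dd x)
  = \int_B \int_K a_i(x)\,(\mathcal W_t\mu)(\dd x)\,\dd t
  = \int_B (A\mathcal W_t\mu)_i\,\dd t
  = \int_B \beta_i(t)\,\dd t.
\]
Hence the time-process in detector $i$ is an inhomogeneous Poisson process on $[0,1]$ with intensity function $\beta_i$, and in particular $n_i\sim \mathcal P\big(\int_0^1\beta_i(t)\,\dd t\big)$ with the $t^i_j$ i.i.d. with density proportional to $\beta_i$ given $n_i$; independence across $i$ is inherited from the disjointness of the label sheets. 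I would also note $\beta_i\in\mathcal C([0,1])$ and $\int_0^1\beta_i<\infty$ follow from \eqref{Integrable} and continuity of $a_i$, so the processes are genuinely well-defined.

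The main obstacle — or rather the point requiring care — is justifying the measurability and integrability hypotheses needed to apply the marking theorem rigorously in this measure-valued setting: one must check that $(t,x)\mapsto a_i(x)$ is a bona fide measurable thinning kernel against $\nu$, that $\nu$ is $\sigma$-finite (here finite, by \eqref{Integrable}), and that the map $t\mapsto \mathcal W_t\mu$ is measurable enough for the disintegration $\nu(\dd t,\dd x)=(\mathcal W_t\mu)(\dd x)\,\dd t$ and for $t\mapsto\beta_i(t)$ to be measurable. Once the Poisson point process framework is in place these are routine, so the proof is essentially a clean application of the marking and mapping theorems; I would present it at that level of detail rather than redeveloping Poisson process theory from scratch.
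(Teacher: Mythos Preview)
Your proposal is correct and follows essentially the same route as the paper: both apply the thinning/marking property of Poisson point processes (the paper cites \cite[Theorem~5.8]{Last2017}) to obtain independent space--time Poisson processes with intensity $a_i\,\mathcal W_t\mu$ for each detector, and then marginalise over $K$ to recover the inhomogeneous time processes with intensity $\beta_i$. One small overclaim: you assert $\beta_i\in\mathcal C([0,1])$, but nothing in the hypotheses guarantees continuity of $t\mapsto\mathcal W_t$; the paper (and your argument) only needs $\beta_i\in L^1(0,1)$, which is what \eqref{Integrable} actually yields.
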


\begin{proof}
  Since the emission process is independent from that of the detection process, we use the thinning property (\cite[Theorem 5.8]{Last2017}) to assert that for each $i \in \set{1, \dotsc, \ndet}$  the point process defined by the points detected by detector $i$ is also a time-space Poisson process, with underlying measure
\[a_i \, \mathcal{W}_t \mu \in \MM_+([0,1] \times K),\] 
{i.e.}, the measure  \[ [t_1, t_2] \times B \longmapsto \int_{t_1}^{t_2} (a_i \mathcal{W}_t \mu)(B) \, dt, \qquad 0 \leq t_1 < t_2\leq 1,\;  B \subset K \text{ Borel set}.\]
Furthermore and still by the thinning property, all these processes are independent.

For a given $i \in \set{1, \ldots, \ndet}$, the number of points $n_i$ and the times of detections $t_j^i$ are that of a (time) Poisson process defined on $[0,1]$, since it corresponds
to marginalising with respect to $x \in K$ the associated time-space Poisson process. The resulting object is thus an inhomogeneous (in time) Poisson process, with intensity obtained by integrating the measure $a_i \, \mathcal{W}_t \mu$ in space:
\[\forall t \in [0,1],\quad
  \int_{K}\dd\paren{a_i\mathcal{W}_t \mu} = \pairing{\mathcal{W}_t \mu}{a_i} = \beta_i(t)
  .
\]
\end{proof}

\begin{remark}
~\autoref{Inhomogeneous} also provides a way to simulate synthetic data according to the above model, see~\autoref{sec4} about numerical simulations for more details on how this may be done.
\end{remark}

We are now in a position to derive the maximum likelihood problem, namely:
\begin{corollary}
  The maximum likelihood problem is given by
  \[\operatorname*{min}_{\mu \in \MM_+(K)} \loss(\mu)
    ,
  \] where
  \[\loss(\mu):= \scl{\mu}{f} -
    \sum_{\gamma\in\Gamma} \log(\scl{\mu}{\gamma})
    ,
  \]
with the convention that $\loss(\mu) = +\infty$ for $\mu$ not in
\[
  \operatorname{dom}(\ell):=\setc[\big]{\mu \in \MM_+(K)}{ \scl{\mu}{\gamma} > 0, \; \gamma\in\Gamma}
  .
\]
\end{corollary}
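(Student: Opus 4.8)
The strategy is to compute the likelihood of the observed data under the product of independent inhomogeneous Poisson processes identified in \autoref{Inhomogeneous}, take its logarithm, and discard the terms that do not depend on $\mu$. Recall that an inhomogeneous Poisson process on $[0,1]$ with intensity function $\beta$ assigns to the outcome consisting of $m$ points at locations $s_1,\dots,s_m$ the likelihood
\[
  \exp\paren*{-\int_0^1 \beta(t)\,\dd t} \prod_{j=1}^m \beta(s_j)
  .
\]
First I would write this out for each detector $i$ with $\beta = \beta_i$, $m = n_i$, and the points $t^i_1,\dots,t^i_{n_i}$, then multiply over $i = 1,\dots,\ndet$ by independence. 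This yields the total likelihood
\[
  L(\mu) = \exp\paren*{-\sum_{i=1}^\ndet \int_0^1 \beta_i(t)\,\dd t}
  \prod_{i=1}^\ndet \prod_{j=1}^{n_i} \beta_i(t^i_j)
  .
\]

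Next I would rewrite the two factors in terms of the pairings against $\mu$. For the exponent, using $\beta_i(t) = \scl{\mathcal W_t\mu}{a_i} = \scl{\mu}{\mathcal W_t^\ast a_i}$, Fubini gives
\[
  \sum_{i=1}^\ndet \int_0^1 \beta_i(t)\,\dd t
  = \int_0^1 \scl{\mu}{\textstyle\sum_i \mathcal W_t^\ast a_i}\,\dd t
  = \scl*{\mu}{\int_0^1 \mathcal W_t^\ast A^\ast\one\,\dd t}
  = \scl{\mu}{f}
  ,
\]
which is licit precisely because of the integrability assumption~\eqref{Integrable} (this is what makes $f$ well-defined and the interchange of sum/integral/pairing valid). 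For the product, $\beta_i(t^i_j) = \scl{\mu}{\mathcal W_{t^i_j}^\ast a_i}$, and as $(i,j)$ ranges over all detections these functions $\mathcal W_{t^i_j}^\ast a_i$ are exactly the elements $\gamma \in \Gamma$ (with multiplicity). Taking $-\log L(\mu)$ then produces
\[
  -\log L(\mu) = \scl{\mu}{f} - \sum_{\gamma\in\Gamma}\log\scl{\mu}{\gamma}
  = \loss(\mu)
  ,
\]
so maximising $L$ over $\mu \in \MM_+(K)$ is the same as minimising $\loss$. The $+\infty$ convention on the complement of $\operatorname{dom}(\loss)$ is just the statement that $L(\mu) = 0$ as soon as some $\scl{\mu}{\gamma} = 0$ (a factor vanishes), so such $\mu$ are never maximisers; equivalently $-\log L = +\infty$ there.

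I do not anticipate a serious obstacle here — the result is essentially a bookkeeping computation once \autoref{Inhomogeneous} is in hand. The one point deserving care is the justification of the exchange of summation over $i$, integration in $t$, and the measure pairing in the exponent: this is exactly where assumption~\eqref{Integrable} is used, guaranteeing that $\int_{[0,1]\times K}\mathcal W_t^\ast\one < \infty$ and hence (together with $0 \le a_i \le \|a_i\|_\infty \one$ and nonnegativity preservation~\eqref{Preservation}) that all the integrands are integrable and Tonelli/Fubini applies. A secondary minor point is to be explicit that $\loss$ is well-defined (not $-\infty$) on its domain and that the ``with multiplicity'' reading of the product $\prod_{\gamma\in\Gamma}$ matches the double product $\prod_i\prod_j$; both are immediate.
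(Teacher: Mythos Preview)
Your proposal is correct and follows essentially the same approach as the paper: write the likelihood for each inhomogeneous Poisson process from \autoref{Inhomogeneous}, use independence to multiply over detectors, take the negative logarithm, and invoke~\eqref{Integrable} to justify the Fubini step collapsing the exponent to $\scl{\mu}{f}$. Your version is slightly more explicit about the $+\infty$ convention and the multiplicity bookkeeping, but there is no substantive difference.
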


\begin{proof}
The negative log-likelihood for the number of points $n$ and times $t_j$ for an inhomogeneous Poisson point process of intensity $\beta \in L^1(0,1)$ is known to be given up to constants by 
\[ \int_0^1 \beta(t) \,dt - \sum_{j=1}^{n} \log\big(\beta (t_j)\big).\]
 The integrability condition~\eqref{Integrable} together with the preservation of nonnegativity~\eqref{Preservation} implies that  $t \mapsto \mathcal{W}_t^\ast a_i \in L^1(0,1; \mathcal C(K))$, and in particular the intensity functions $\beta_i$ defined in the above proof all lie in $L^1(0,1)$.
 
By independence, summing the above negative log-likelihood over $i \in \set{1, \ldots, \ndet}$ we find the full negative log-likelihood
\begin{align}
\loss(\mu) & = \sum_{i=1}^\ndet \int_0^1 \beta_i(t) \,dt -\sum_{i=1}^\ndet \sum_{j=1}^{n_i} \log\big(\beta_i (t_j^i)\big),\\
  & = \pairing{\mu}{f} - \sum_{\gamma\in\Gamma} \log(\scl{\mu}{\gamma})
    .
\end{align}
The last line just uses the fact that
$\beta_i (t_j^i) = \pairing{\mu}{W_{t_j^i}^* a_i}$
whereas the exchange of order of integration in the first term can be performed by Fubini's theorem, owing to~\eqref{Integrable}.
\end{proof}

\begin{remark}
In the stationary case, {i.e.}, $\mathcal{W}_t = \mathrm{Id}$ over $[0,1]$, we have 
$f = A^\ast \one$ and for $\gamma \in \Gamma$ defined by $i \in \set{1, \ldots, \ndet}$, $j  \in \set{1, \ldots, n_i}$, $\gamma = \mathcal{W}_{t_j^i}^\ast a_i= a_i$. 
Hence \[\sum_{\gamma\in\Gamma} \log(\scl{\mu}{\gamma}) = \sum_{i=1}^\ndet \sum_{j=1}^{n_i} \log(\scl{\mu}{a_i}) = \sum_{i=1}^\ndet n_i \log\paren[\big]{(A\mu)_i},\] so that the loss function writes
\[
  \loss(\mu) =\scl{\mu}{A^\ast \one} - \sum_{i=1}^\ndet n_i \log\paren[\big]{(A\mu)_i}
  .
\]
Up to constants, this is nothing but the usual negative log-likelihood $d(y || A \mu)$ with $y = (n_1, \ldots, n_\ndet)$ used in the static case~\cite{Verdier2020}, and $d$ the Kullback-Leibler divergence defined for nonnegative vectors $u, v \in \mathbb{R}^\ndet$ by \[d(u||v) = \sum_{i=1}^\ndet  u_i \log\Big(\frac{u_i}{v_i}\Big) - u_i + v_i,\]
with value $+\infty$ if there exists $i \in \{1,\ldots, \ndet\}$ such that $u_i >0$, $v_i  = 0$.
\end{remark}

%
%
%
%

\subsection{Optimality conditions.}

We derive below the optimality conditions for the maximum likelihood problem.

In order to do so, we endow $\MM(K)$ with its strong topology. The function $\loss$ takes finite values and is differentiable on its domain $\operatorname{dom}(\loss)$, which is open. 
For $\mu \in \operatorname{dom}(\loss)$, we readily compute
\begin{align}
  \label{eq:nablaloss}
  \nabla \loss(\mu)  = f -\sum_{\gamma\in\Gamma}\frac{\gamma} {\scl{\mu}{{\gamma}}},
\end{align}
an element of $\Cont(K)$.
\begin{proposition}
A measure $\mu^\star \in \operatorname{dom}(\loss)$ is optimal if and only if 
\begin{align}
\label{kkt}
  \nabla \loss(\mu^\star )  &\geq 0 \; \text{on} \; K,  \\
  \nabla \loss(\mu^\star )  &=  0 \; \text{on} \; \operatorname{supp}(\mu^\star ).
\end{align}
\end{proposition}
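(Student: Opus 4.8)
The plan is to treat this as a convex optimization problem over the cone $\MM_+(K)$ and derive the first-order optimality conditions directly, since $\loss$ is convex (the map $\mu \mapsto -\log\scl{\mu}{\gamma}$ is convex and $\mu \mapsto \scl{\mu}{f}$ is linear) and differentiable on the open set $\operatorname{dom}(\loss)$. Because the problem is convex with a convex feasible set, $\mu^\star \in \operatorname{dom}(\loss)$ is optimal if and only if the directional derivative of $\loss$ at $\mu^\star$ is nonnegative in every feasible direction, i.e. $\langle \nabla\loss(\mu^\star), \nu - \mu^\star\rangle \geq 0$ for all $\nu \in \MM_+(K)$ (here I would note that one must stay inside $\operatorname{dom}(\loss)$ to compute the derivative, but a standard convexity argument lets one pass to the limit along the segment $[\mu^\star,\nu]$, which remains in $\operatorname{dom}(\loss)$ near $\mu^\star$ whenever $\nu\in\MM_+(K)$ — actually more carefully, the segment lies in $\operatorname{dom}(\loss)$ for small parameter since $\operatorname{dom}(\loss)$ is open and relatively convex, so the one-sided derivative exists and equals $\langle\nabla\loss(\mu^\star),\nu-\mu^\star\rangle$). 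So the heart of the argument is to show that this variational inequality is equivalent to the pair of pointwise conditions \eqref{kkt}.

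First I would prove that the two conditions imply optimality. Assume $g := \nabla\loss(\mu^\star) \geq 0$ on $K$ and $g = 0$ on $\operatorname{supp}(\mu^\star)$. For any $\nu \in \MM_+(K)$ we have $\scl{\nu}{g} \geq 0$ because $g \geq 0$ and $\nu \geq 0$, while $\scl{\mu^\star}{g} = 0$ because $g$ vanishes on $\operatorname{supp}(\mu^\star)$ (a continuous function vanishing on the support of a measure integrates to zero against it). Hence $\langle g, \nu - \mu^\star\rangle = \scl{\nu}{g} - \scl{\mu^\star}{g} \geq 0$, which is exactly the optimality inequality; convexity of $\loss$ then gives $\loss(\nu) \geq \loss(\mu^\star) + \langle g, \nu-\mu^\star\rangle \geq \loss(\mu^\star)$.

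Conversely, assume $\mu^\star$ is optimal, so $\langle g, \nu - \mu^\star\rangle \geq 0$ for all $\nu \in \MM_+(K)$. Taking $\nu = 2\mu^\star$ gives $\scl{\mu^\star}{g} \geq 0$ and taking $\nu = 0$ gives $-\scl{\mu^\star}{g}\geq 0$, hence $\scl{\mu^\star}{g} = 0$; feeding this back, $\scl{\nu}{g} \geq 0$ for every $\nu \in \MM_+(K)$, and testing against Dirac masses $\delta_x$, $x \in K$ (which are admissible nonnegative measures), yields $g(x) \geq 0$ for all $x$, i.e. the first condition. For the second condition, $g \geq 0$ on $K$ together with $\scl{\mu^\star}{g} = 0$ and $\mu^\star \geq 0$ forces $g = 0$ $\mu^\star$-a.e.; since $g$ is continuous and $\operatorname{supp}(\mu^\star)$ is the smallest closed set of full measure, $g$ must vanish on all of $\operatorname{supp}(\mu^\star)$ (if $g(x_0)>0$ for some $x_0\in\operatorname{supp}(\mu^\star)$, then $g>0$ on a neighborhood $U$ of $x_0$, and $\mu^\star(U)>0$ by definition of support, contradicting $\scl{\mu^\star}{g}=0$).

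The main obstacle I anticipate is the analytic justification that optimality is equivalent to the variational inequality $\langle\nabla\loss(\mu^\star),\nu-\mu^\star\rangle\geq 0$ — specifically, verifying that the one-sided directional derivative along $[\mu^\star,\nu]$ exists, is computed by the formula \eqref{eq:nablaloss}, and that the segment stays in $\operatorname{dom}(\loss)$ long enough for this to make sense (one needs $\scl{\mu^\star}{\gamma}>0$ for all $\gamma\in\Gamma$, which holds since $\mu^\star\in\operatorname{dom}(\loss)$, and then $\scl{(1-s)\mu^\star+s\nu}{\gamma}\geq(1-s)\scl{\mu^\star}{\gamma}>0$ for $s\in[0,1)$, so the segment minus its endpoint lies in $\operatorname{dom}(\loss)$). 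Everything else is the standard convex-analytic dictionary between the subdifferential-type condition and pointwise sign conditions, using that Dirac masses and scalings of $\mu^\star$ are admissible test measures.
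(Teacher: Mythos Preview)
Your proof is correct and follows essentially the same route as the paper: convexity reduces optimality to the variational inequality $\scl{\nu-\mu^\star}{\nabla\loss(\mu^\star)}\geq 0$ for all $\nu\in\MM_+(K)$, which is precisely the condition $-\nabla\loss(\mu^\star)\in N_{\MM_+(K)}(\mu^\star)$ used in the paper. The only difference is that the paper cites an external lemma for the identification of the normal cone, whereas you prove that characterisation directly by testing against $0$, $2\mu^\star$, and Dirac masses---a self-contained but equivalent argument.
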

\begin{proof}
Since the function $\loss$ is convex, a measure $\mu^\star \in \operatorname{dom}(\loss)$ is optimal if and only if 
\[ \nabla \loss(\mu^\star) \in - N_{\MM_+(K)}(\mu^\star).\]
where \[N_{\MM_+(K)}(\mu):= \setc*{g \in \Cont(K)}{\forall \nu \in \MM_+(K), \, \pairing{\nu - \mu}{g}\leq 0}\] is the normal cone to $\MM_+(K)$ at $\mu$.
From~\cite[Lemma 3.5]{Verdier2020}, the normal cone can be characterised by
\[
  N_{\MM_+(K)}(\mu) = \setc[\big]{g \in \Cont(K)}{ g \leq 0 \text{ on } K,\;\;
    g = 0 \text{ on } \operatorname*{supp}(\mu)}
  ,
\]
   and the claim follows. 
\end{proof}
Note that this optimality criterion shows that if there exists a measure $\mu$ such that \[ \sum_{\gamma\in\Gamma}\frac{\gamma} {\scl{\mu}{{\gamma}}} = f,\] then $\mu$ is optimal.

\begin{corollary}
The infimum of $\loss$ is a minimum.
\end{corollary}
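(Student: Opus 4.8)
The plan is to show that the infimum of $\loss$ over $\MM_+(K)$ is attained, by exhibiting a minimising sequence that is bounded in total variation (hence weak-$\ast$ precompact by Banach--Alaoglu) and then passing to the limit using lower semicontinuity of $\loss$. The subtlety is that $\loss$ is \emph{not} weak-$\ast$ lower semicontinuous as written — the term $-\sum_{\gamma}\log\scl{\mu}{\gamma}$ is weak-$\ast$ upper semicontinuous in $\mu$ on the set where the pairings are positive, but it can jump \emph{down} to $-\infty$ only if some $\scl{\mu}{\gamma}\to 0$, which is the bad direction; in fact $-\log$ is convex and the pairing is weak-$\ast$ continuous, so $\mu\mapsto-\log\scl{\mu}{\gamma}$ is weak-$\ast$ lower semicontinuous where finite, and equals $+\infty$ off the domain, so $\loss$ \emph{is} weak-$\ast$ lower semicontinuous on $\MM_+(K)$ (with the $+\infty$ convention). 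So the only real work is the a priori bound.

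First I would establish coercivity: along any sequence $\mu_n \in \operatorname{dom}(\loss)$ with $\|\mu_n\|_{TV} = \mu_n(K) \to \infty$, we have $\loss(\mu_n)\to+\infty$. Indeed, by \eqref{Positive}, $\scl{\mu_n}{f} \geq c\,\mu_n(K)$, while the logarithmic term is controlled from above: each $\gamma\in\Gamma$ is a fixed continuous function, so $\scl{\mu_n}{\gamma}\leq \|\gamma\|_\infty \mu_n(K)$, giving $-\sum_\gamma \log\scl{\mu_n}{\gamma} \geq -n\log(C\,\mu_n(K))$ for $C = \max_\gamma\|\gamma\|_\infty$ (treating the case $\mu_n(K)\geq 1$, which eventually holds). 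Hence $\loss(\mu_n)\geq c\,\mu_n(K) - n\log(C\,\mu_n(K)) \to +\infty$ since the linear term dominates the logarithm. Also one should note $\operatorname{dom}(\loss)\neq\emptyset$, e.g.\ a suitable multiple of Lebesgue measure on $K$ (or any measure whose support meets every relevant region) lies in the domain, so $\inf\loss < +\infty$.

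Next, take a minimising sequence $(\mu_n)$ with $\loss(\mu_n)\to \inf\loss =: m < +\infty$. By the coercivity just shown, $\sup_n \mu_n(K) < +\infty$, so $(\mu_n)$ is bounded in $\MM(K)$; by Banach--Alaoglu (and metrisability of bounded sets in the weak-$\ast$ topology on $\MM(K)$, since $\Cont(K)$ is separable) there is a subsequence $\mu_{n_k} \rightharpoonup^\ast \mu^\star$ with $\mu^\star\in\MM_+(K)$ (the nonnegative cone is weak-$\ast$ closed). By the weak-$\ast$ lower semicontinuity of $\loss$ discussed above, $\loss(\mu^\star)\leq \liminf_k \loss(\mu_{n_k}) = m$, so $\mu^\star$ is a minimiser and $m$ is attained.

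\textbf{The main obstacle} is the lower semicontinuity claim for the singular term: one must be careful that $\mu\mapsto-\log\scl{\mu}{\gamma}$, extended by $+\infty$ when $\scl{\mu}{\gamma}=0$, really is weak-$\ast$ lower semicontinuous. This follows because $\mu\mapsto\scl{\mu}{\gamma}$ is weak-$\ast$ continuous (as $\gamma\in\Cont(K)$) and $s\mapsto -\log s$ (with value $+\infty$ at $s\leq 0$, although here $s\geq 0$ always by $\gamma\geq0$ and $\mu\geq0$... actually $\gamma = \mathcal{W}^*_{t}a_i$ need not be nonnegative, but $\scl{\mu}{\gamma}$ can still be bounded and the composition argument via continuity of the pairing plus lower semicontinuity of $-\log$ on $\RR$ still applies) is lower semicontinuous and non-increasing on $(0,\infty)$. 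A clean way to phrase it: if $\mu_{n_k}\rightharpoonup^\ast\mu^\star$ then $\scl{\mu_{n_k}}{\gamma}\to\scl{\mu^\star}{\gamma}$, and $\liminf(-\log\scl{\mu_{n_k}}{\gamma}) \geq -\log\scl{\mu^\star}{\gamma}$ with the usual conventions, so summing the finitely many $\gamma\in\Gamma$ and adding the weak-$\ast$ continuous linear term $\scl{\cdot}{f}$ gives $\liminf_k\loss(\mu_{n_k})\geq\loss(\mu^\star)$. Everything else is routine.
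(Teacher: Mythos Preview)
Your proof is correct and follows the standard direct method: coercivity of $\loss$ (via the lower bound \eqref{Positive} on $f$ and a logarithmic upper bound on the pairings), boundedness of a minimising sequence, weak-$\ast$ compactness by Banach--Alaoglu, and weak-$\ast$ lower semicontinuity of $\loss$. One small correction: each $\gamma = \mathcal{W}_{t_j^i}^\ast a_i$ \emph{is} nonnegative, by the Preservation assumption \eqref{Preservation} (the paper notes this is equivalent to $g\geq 0\Rightarrow\mathcal{W}_t^\ast g\geq 0$) together with $a_i\geq 0$; your parenthetical worry is thus unfounded, though harmless for the argument.

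The paper takes a different route. Instead of proving coercivity on all of $\MM_+(K)$, it first uses the previously derived optimality conditions~\eqref{kkt} to observe that any minimiser must satisfy $\scl{\mu^\star}{f}=n$ (since $\scl{\mu^\star}{\nabla\loss(\mu^\star)}=0$). It then restricts the search to the slice $\setc{\mu\in\MM_+(K)}{\scl{\mu}{f}=n}$, which is bounded (hence weak-$\ast$ compact) directly from $f\geq c$, and concludes by lower semicontinuity. The implicit justification for this restriction is a one-line rescaling: for $\mu\in\operatorname{dom}(\loss)$ the map $\lambda\mapsto\loss(\lambda\mu)$ is minimised at $\lambda=n/\scl{\mu}{f}$, so the infimum over $\MM_+(K)$ equals the infimum over the slice. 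Your approach avoids this detour through the KKT conditions and the rescaling trick, and is in that sense more self-contained; the paper's approach, on the other hand, yields the mass identity $\scl{\mu^\star}{f}=n$ as a by-product and mirrors the conservation law \eqref{eq:constantmass} satisfied by the ML-EM iterates.
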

\begin{proof}
  If an optimum exists,
  since $\nabla \loss(\mu^\star) = 0$ vanishes on the support of $\mu^\star$,
  we obtain \(\pairing{\mu^{\star}}{\nabla\loss(\mu^{\star})} = 0\).
  A computation using \eqref{eq:nablaloss} shows on the other hand that \[\pairing{\mu^{\star}}{\nabla\loss(\mu^{\star})} = \pairing{\mu^{\star}}{f} - \#\Gamma =  \pairing{\mu^{\star}}{f} - n.\]
  As a result, it suffices to minimise~$\loss$ on the set $\setc*{\mu \in \MM_+(K)}{\scl{\mu}{f} =  n}$.
 Any measure $\mu$ in the previous set satisfies $\mu(K) \leq 1/c$ thanks to the lower bound on the function~$f$.
 Thus, the set is bounded and hence weak-$\ast$ compact by the Banach--Alaoglu theorem~\cite{Rudin1991}.
 Since $\loss$ is clearly weak-$\ast$ continuous, the claim follows.
\end{proof}

\section{ML-EM algorithm}
\label{sec3}

In this section, we define the ML-EM algorithm, and prove that it is monotone and convergent in the low noise regime. Since all measures and continuous functions will from now on all be defined on the compact $K$ in this section, we drop the reference to $K$ in the functional spaces, denoting them $\MM$, $\MM_+$ and $\Cont$ respectively.
\subsection{Definition and well-posedness}
For $\mu_0 \in \operatorname{dom}(\loss)$, we define the iterates
\begin{equation}
\label{mlem}
\mu_{k+1} =\frac{\mu_k}{ f} \sum_{\gamma\in\Gamma} \frac{\gamma}{ \scl{\mu_k}{\gamma}}.
\end{equation}
We first observe that the algorithm is well-defined.
This is because
\[ \mu_k \in \operatorname{dom}(\loss) \Longrightarrow \mu_{k+1} \in \operatorname{dom}(\loss). \]
Indeed, using the lower bound~\eqref{Positive} on $f$, we find for any \(\gamma\in\Gamma\):
\begin{align}
\scl{\mu_{k+1}}{\gamma} & \geq \frac{1}{c\scl{\mu_{k}}{\gamma}} \scl{\mu_{k}}{\gamma^2}> 0, 
\end{align}
since the Cauchy--Schwarz inequality $\scl{\mu_{k}}{\gamma^2} \mu_k(K) \geq \scl{\mu_{k}}{\gamma}^2$ prevents $\scl{\mu_{k}}{\gamma^2}$ from vanishing.
More precisely,
after defining the compact
\[ \widetilde K:= \mathop{\bigcup_{\gamma\in\Gamma}} \operatorname{supp}(\gamma)
  ,
\]
we find that $\operatorname{supp}(\mu_k) = \widetilde K$ for all $k \geq 1$, provided that $\operatorname{supp}(\mu_0) = K$ and $\mu_0 \in \operatorname{dom}(\loss)$. Note that the optimality conditions~\eqref{kkt} prove that any optimal measure $\mu^\star$ satisfies $\operatorname{supp}(\mu^\star)  \subset \widetilde K$.

Finally, we remark that
\begin{equation}
\label{eq:constantmass}  
  \forall k \geq 1, \qquad \scl{\mu_k}{f} = n 
  .
\end{equation}

\begin{remark}
  Assume that for some times $t_0 < t_1, \ldots < t_N$ we have
  \[\forall t \in (t_{s-1}, t_s), \quad \mathcal{W}_t = \mathcal{W}_{t_s}, \quad s = 1, \ldots N,\] {i.e.}, the movement is piecewise constant on $(t_{s-1}, t_{s})$ for $s \in \set{1, \ldots N}$.
  We may then also group points by phase and detector by denoting $n_i^s$ the number of points detected in detector $i$ between $t_{s-1}$ and $t_s$, and $n^s = (n_1^s, \ldots, n_m^s)$. Using the notations $A_s := A \mathcal{W}_{t_s}$, $\Delta t_s = t_s-t_{s-1}$, the algorithm then rewrites 
\begin{align}
  \label{eq:piecewiseconstant}
\mu_{k+1} =\frac{\mu_k}{ \sum_{s=1}^N (\Delta t_s) A_s^\ast \boldsymbol{1}} \sum_{s=1}^{N} A_s^\ast\Big(\frac{n^s}{ A_s \mu_k}\Big).
\end{align}
In other words, we recover the algorithm for \emph{gated} data, proposed in~\cite{qiao06_motion_incor_recon_method_gated_pet_studies, Hinkle2012} for the intensity-preserving action, and generalised in~\cite{Li2019, Oktem2019}. 
The ensuing analysis is up to our knowledge the first rigorous justification for these informally-derived algorithms, under the assumption of piecewise-constant movement. 

Note also that \eqref{eq:piecewiseconstant} can be rewritten as
\begin{align}
  \label{eq:piecewiseconstant_}
  \mu_{k+1} =\frac{\mu_k}{ \sum_{s=1}^N  \tilde{A}_s^\ast \boldsymbol{1}} \sum_{s=1}^{N}  \tilde{A}_s^\ast\Big(\frac{n^s}{ \tilde{A}_s \mu_k}\Big).
\end{align}
where \(\tilde{A}_s \coloneqq (\Delta t_s) A_s \).
In this case, this is simply the standard ML-EM algorithm with the new operator \(\tilde{A} : \MM \to \RR^{\ndet N}\) defined by \(\tilde{A} \coloneqq \bracket{\tilde{A}_1,\dotsc,\tilde{A}_N}\).

In particular, we also recover the classical ML-EM algorithm, since if $\mathcal{W}_t = \mathrm{Id}$ for all $t\in[0,1]$, the above simplifies to 
\begin{align}
\mu_{k+1} =\frac{\mu_k}{ A^\ast \boldsymbol{1}} A^\ast\Big(\frac{y}{ A \mu_k}\Big),
\end{align}
with $y = (n_1, \ldots, n_\ndet)$.
\end{remark}

\subsection{Monotonicity.}

For a nonnegative function \(\gamma \in \mathcal{C}\) and a nonnegative measure \(\mu \in \MM_+\), we define the measure
\[
  \nu_{\gamma}(\mu) \coloneqq \frac{\gamma \mu}{\pairing{\mu}{\gamma}}
  .
\]
Note that \(\nu_{\gamma}\) is a probability measure over $K$, and that, following \eqref{mlem},
\begin{equation}
  \label{eq:mlemnu}
  \mu_{k+1} ={\frac{1}{f}} \sum_{\gamma\in\Gamma}{\nu_{\gamma}(\mu_k)}
  .
\end{equation}
Define the set $X_k:= \setc[\big]{\mu \in \MM_+}{\mu_{k+1} \ll \mu \ll \mu_k, \; \scl{\mu}{f} = n}$ for \(k\in\NN\). The conservation property~\eqref{eq:constantmass} shows that $\mu_k, \mu_{k+1} \in X_k$.

We now define the surrogate function \(Q_k\colon X_k \to \RR\) by
\[
  Q_k(\mu) \coloneqq \loss(\mu) + \sum_{\gamma\in\Gamma}  D\paren[\big]{\nu_{\gamma}(\mu_k) || \nu_{\gamma}(\mu)}
  ,
\]
where $D$ is the Kullback-Leibler divergence defined for any two probability measures $\mu \ll \nu$ over $K$ by \[D(\mu|| \nu) :=  \int_K\log\bigg(\frac{\dd \mu}{\dd \nu}\bigg) \, d\mu, \]with  $\frac{\dd \mu}{\dd \nu}$ standing for the Radon--Nikodym derivative of $\mu$ with respect to $\nu$.

\begin{lemma}
  \label{prop:surrogate}
The following holds for all $k \geq 1$:
\begin{enumerate}[label=\upshape(\roman*)]
  \item\label{surr:it:major} \(Q_k(\mu) \geq \loss(\mu), \qquad \mu \in X_k\)
\item\label{surr:it:contact} $Q_k (\mu_k) = \loss(\mu_k)$
\item\label{surr:it:div}
  \(Q_k(\mu) - Q_k(\mu_{k+1}) = \pairing{\mu - \mu_{k+1}}{f} + D(f \mu_{k+1}|| f \mu), \qquad \mu \in X_k \)
  \item\label{surr:it:surrit}
    \(Q_k(\mu_k) - Q_k(\mu_{k+1}) =  D(f \mu_{k+1}|| f \mu_k)\)

\end{enumerate}
\end{lemma}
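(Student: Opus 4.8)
The plan is to verify the four identities essentially by direct computation, using the expression \eqref{eq:mlemnu} for the iterates together with elementary properties of the Kullback--Leibler divergence $D$ for \emph{positive} (not necessarily probability) measures. First I would record the scaling identity $D(a\mu \| a\nu) = a\, D(\mu\|\nu) \cdot (\text{something})$ --- more precisely, for measures $\mu \ll \nu$ on $K$, $D(\lambda\mu\|\lambda\nu) = \lambda D(\mu\|\nu)$ whenever $\lambda$ is a positive constant, and more generally I would want the version where the densities are weighted by a common positive function; since $D(f\mu_{k+1}\|f\mu) = \int \log\bigl(\tfrac{\dd(f\mu_{k+1})}{\dd(f\mu)}\bigr)\dd(f\mu_{k+1}) = \int \log\bigl(\tfrac{\dd\mu_{k+1}}{\dd\mu}\bigr) f \,\dd\mu_{k+1}$, the common factor $f$ only reweights the integrating measure. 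I would also note $\nu_\gamma(\mu_k) = \gamma\mu_k / \pairing{\mu_k}{\gamma}$ is a probability measure, so $D(\nu_\gamma(\mu_k)\|\nu_\gamma(\mu))$ makes sense for $\mu \in X_k$ since $\mu_{k+1} \ll \mu$ forces $\gamma\mu_k \ll \gamma\mu$ (as $\mu_k$ and $\mu_{k+1}$ have the same support $\widetilde K$ by the well-posedness discussion, and $\gamma\mu_{k+1} \ll \gamma\mu$).

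For \ref{surr:it:major}, nonnegativity of $D$ gives $Q_k(\mu) = \loss(\mu) + \sum_\gamma D(\nu_\gamma(\mu_k)\|\nu_\gamma(\mu)) \geq \loss(\mu)$ immediately. For \ref{surr:it:contact}, each summand $D(\nu_\gamma(\mu_k)\|\nu_\gamma(\mu_k)) = 0$. The substantive computations are \ref{surr:it:div} and \ref{surr:it:surrit}. For \ref{surr:it:div} I would expand
\[
  \sum_{\gamma\in\Gamma} D\bigl(\nu_\gamma(\mu_k)\|\nu_\gamma(\mu)\bigr)
  = \sum_{\gamma\in\Gamma} \int_K \log\!\left(\frac{\dd\nu_\gamma(\mu_k)}{\dd\nu_\gamma(\mu)}\right)\dd\nu_\gamma(\mu_k),
\]
and observe that $\frac{\dd\nu_\gamma(\mu_k)}{\dd\nu_\gamma(\mu)} = \frac{\pairing{\mu}{\gamma}}{\pairing{\mu_k}{\gamma}}\cdot\frac{\dd\mu_k}{\dd\mu}$ on $\widetilde K$. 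Plugging this in splits each integral into a term involving $\log(\dd\mu_k/\dd\mu)$ and a constant term $\log(\pairing{\mu}{\gamma}/\pairing{\mu_k}{\gamma})$; summing over $\gamma$ and using $\loss(\mu) = \pairing{\mu}{f} - \sum_\gamma \log\pairing{\mu}{\gamma}$ should make the $\log\pairing{\mu}{\gamma}$ and $\log\pairing{\mu_k}{\gamma}$ pieces telescope against the loss terms, leaving $Q_k(\mu) = \pairing{\mu}{f} + \sum_\gamma \int_K \log(\dd\mu_k/\dd\mu)\,\dd\nu_\gamma(\mu_k) + (\text{terms independent of }\mu)$. The key algebraic point is that $\sum_\gamma \nu_\gamma(\mu_k) = f\mu_{k+1}$ by \eqref{eq:mlemnu}, so $\sum_\gamma \int_K \log(\dd\mu_k/\dd\mu)\,\dd\nu_\gamma(\mu_k) = \int_K \log(\dd\mu_k/\dd\mu)\,f\,\dd\mu_{k+1}$. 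I would then form the difference $Q_k(\mu) - Q_k(\mu_{k+1})$: the $\mu$-independent constants cancel, and I am left with $\pairing{\mu}{f} - \pairing{\mu_{k+1}}{f} + \int_K \log\bigl((\dd\mu_k/\dd\mu)/(\dd\mu_k/\dd\mu_{k+1})\bigr) f\,\dd\mu_{k+1} = \pairing{\mu-\mu_{k+1}}{f} + \int_K \log(\dd\mu_{k+1}/\dd\mu)\,f\,\dd\mu_{k+1}$, and that last integral is exactly $D(f\mu_{k+1}\|f\mu)$ by the reweighting remark above. Finally \ref{surr:it:surrit} is the special case $\mu = \mu_k$ of \ref{surr:it:div}, using $\pairing{\mu_k}{f} = \pairing{\mu_{k+1}}{f} = n$ from \eqref{eq:constantmass} so that the linear term vanishes.

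The main obstacle I anticipate is bookkeeping with Radon--Nikodym derivatives and making sure all absolute-continuity requirements hold so that every $D(\cdot\|\cdot)$ and every density is well-defined: one must check $\mu_{k+1} \ll \mu \ll \mu_k$ on the nose (which is built into the definition of $X_k$), that these measures all share the support $\widetilde K$ for $k \geq 1$, and that the chain-rule manipulation $\frac{\dd\nu_\gamma(\mu_k)}{\dd\nu_\gamma(\mu)} = \frac{\pairing{\mu}{\gamma}}{\pairing{\mu_k}{\gamma}}\frac{\dd\mu_k}{\dd\mu}$ together with $\frac{\dd(f\mu_{k+1})}{\dd(f\mu)} = \frac{\dd\mu_{k+1}}{\dd\mu}$ is legitimate (it is, since $f \geq c > 0$ is continuous, hence bounded away from $0$ and $\infty$ on the compact $K$, so multiplying by $f$ does not affect absolute continuity or the density). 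Integrability of the logarithmic integrands follows because $f$ is bounded, the measures involved are finite, and $D$ of probability measures with the relevant densities is finite here since everything lives on the compact $\widetilde K$ with densities bounded above and below — this should be remarked on but needs no heavy argument. Once these regularity points are dispatched, \ref{surr:it:div} and \ref{surr:it:surrit} are a few lines of algebra, and \ref{surr:it:major}, \ref{surr:it:contact} are immediate.
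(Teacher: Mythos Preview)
Your proposal is correct and follows essentially the same approach as the paper: items \ref{surr:it:major} and \ref{surr:it:contact} by nonnegativity of $D$ and $D(\nu\|\nu)=0$; item \ref{surr:it:div} by expanding the divergence terms, using the key identity $\sum_{\gamma}\nu_{\gamma}(\mu_k)=f\mu_{k+1}$ from \eqref{eq:mlemnu}, and forming the difference; item \ref{surr:it:surrit} as the special case $\mu=\mu_k$ via \eqref{eq:constantmass}. Your discussion of the Radon--Nikodym bookkeeping is in fact more careful than the paper's own proof, which simply states the intermediate expression for $Q_k(\mu)$ and computes the difference directly.
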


\begin{proof}
  The properties of divergences
  allows us to conclude about \ref{surr:it:major} and \ref{surr:it:contact}.

  One computes
  \[Q_k(\mu) = \scl{\mu}{f} - \sum_{\gamma\in\Gamma} \pairing[\Big]{\nu_{\gamma}(\mu_k)}{\log\paren[\Big]{  \scl{\mu_k}{\gamma} \frac{d \mu}{d \mu_k} }}
    .\]
This gives
\begin{align*}
  Q_k(\mu) - Q_k(\mu_{k+1}) & = \\
  \pairing{\mu - \mu_{k+1}}{f}
                              &+\sum_{\gamma\in\Gamma}  \pairing[\bigg]{\nu_{\gamma}\paren{\mu_k}}{\log\paren[\Big]{\pairing{\mu_k}{\gamma} \frac{\dd\mu_{k+1}}{\dd\mu_k}} - \log\paren[\Big]{ \pairing{\mu_k}{\gamma} \frac{\dd\mu}{\dd\mu_{k}}}}     \\
                            & =
                              \pairing{\mu - \mu_{k+1}}{f}
                              +
                              \pairing[\bigg]{\underbrace{\sum_{\gamma\in\Gamma}\nu_{\gamma}\paren{\mu_k}}_{f \mu_{k+1}}}{\log\paren[\Big]{ \frac{\dd\mu_{k+1}}{\dd\mu}} }     \\
                            & =
                              \pairing{\mu - \mu_{k+1}}{f} +
                              D\paren[\big]{f\mu_{k+1}|| f\mu}.
\end{align*}
which proves \ref{surr:it:div}.
Finally, \ref{surr:it:surrit} is a consequence of \ref{surr:it:div} and \eqref{eq:constantmass}.
\end{proof}
These computations yield the monotony of the function $\loss$ along iterates.
\begin{corollary}
  \label{prop:lossdecrease}
  For any $\mu_0 \in \operatorname{dom}(\loss)$, we have
  \[ 0 \leq Q_k(\mu_k) - Q_k(\mu_{k+1}) \leq \loss(\mu_k) - \loss(\mu_{k+1})
    \qquad
    k \in \NN
    .\]
\end{corollary}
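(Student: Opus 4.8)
The plan is to chain together the four assertions of \autoref{prop:surrogate}. These all apply for every $k\geq 1$: indeed $\mu_k,\mu_{k+1}\in X_k$ by the conservation property~\eqref{eq:constantmass}, and $\mu_{k+1}\ll\mu_k$ because $\mu_{k+1}=h\,\mu_k$ with $h=\frac1f\sum_{\gamma\in\Gamma}\frac{\gamma}{\scl{\mu_k}{\gamma}}$ a nonnegative continuous function (positivity of $f$ being guaranteed by~\eqref{Positive}). Evaluating the majorization~\ref{surr:it:major} at the admissible point $\mu=\mu_{k+1}\in X_k$ and then using the contact identity~\ref{surr:it:contact}, I obtain
\[
  \loss(\mu_k)-\loss(\mu_{k+1})
  = Q_k(\mu_k)-\loss(\mu_{k+1})
  \geq Q_k(\mu_k)-Q_k(\mu_{k+1}),
\]
which is precisely the right-hand inequality. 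It then remains to establish $0\leq Q_k(\mu_k)-Q_k(\mu_{k+1})$, and by~\ref{surr:it:surrit} the middle term equals $D(f\mu_{k+1}|| f\mu_k)$.

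To see that this divergence is nonnegative, I would note that although $f\mu_{k+1}$ and $f\mu_k$ are not probability measures, they share the same total mass $n$ by~\eqref{eq:constantmass}; since the Radon--Nikodym derivative is unchanged under a common rescaling, $D(c\mu|| c\nu)=c\,D(\mu|| \nu)$ for $c>0$, so that $D(f\mu_{k+1}|| f\mu_k)=n\,D\!\left(\tfrac1n f\mu_{k+1}\,\middle|\middle|\,\tfrac1n f\mu_k\right)\geq 0$ by the usual nonnegativity of the Kullback--Leibler divergence between probability measures (and trivially $=+\infty\geq 0$ if absolute continuity should fail, which it does not here). Combining the two displays yields
\[
  0\leq Q_k(\mu_k)-Q_k(\mu_{k+1})\leq \loss(\mu_k)-\loss(\mu_{k+1}).
\]

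I do not anticipate any genuine obstacle: the corollary is bookkeeping on top of \autoref{prop:surrogate} together with the nonnegativity of $D$. The only mild point of care is the index convention — \autoref{prop:surrogate} was stated for $k\geq 1$ because $\scl{\mu_k}{f}=n$ is only guaranteed from the first iterate on; if one wants $k=0$ included in $\NN$, it suffices either to assume $\scl{\mu_0}{f}=n$ (no loss of generality after one step) or to replace $n$ by $\scl{\mu_0}{f}$ in the definition of $X_0$, leaving the argument unchanged.
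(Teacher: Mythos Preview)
Your proof is correct and follows essentially the same route as the paper: use \ref{surr:it:contact} and \ref{surr:it:major} (at $\mu=\mu_{k+1}$) for the right-hand inequality, and \ref{surr:it:surrit} together with nonnegativity of the Kullback--Leibler divergence for the left-hand one. Your extra care about the common total mass $n$ when invoking nonnegativity of $D$, and your remark on the index convention $k\geq 1$ versus $k\in\NN$, are details the paper leaves implicit.
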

\begin{proof}
  
It is a consequence of \autoref{prop:surrogate}.
First, using \autoref{prop:surrogate}-\ref{surr:it:surrit}, we get \(Q_k(\mu_k) - Q_{k}(\mu_{k+1})   \geq 0\).
Now, conclude noticing that \autoref{prop:surrogate}-\ref{surr:it:major} and \autoref{prop:surrogate}-\ref{surr:it:contact} imply \(Q_k(\mu_k) - Q_k(\mu_{k+1}) = \loss(\mu_k) - Q_k(\mu_{k+1}) \leq \loss(\mu_k) - \loss(\mu_{k+1})\).
\end{proof}

\subsection{Convergence}
In this section, we highlight the main ideas of proofs, which largely follow~\cite{Verdier2020}.

\begin{proposition}
\label{fixed}
For any $\mu_0 \in \operatorname{dom}(\loss)$, the weak-$\ast$ cluster points $\bar \mu$ of $\set{\mu_k}_{k\in\NN}$ exist and are fixed points of the algorithm, namely
\begin{equation}
\bar \mu=\frac{\bar \mu}{ f} \sum_{\gamma\in\Gamma}\frac{\gamma}{ \scl{\bar \mu}{\gamma}}.
\end{equation}
\end{proposition}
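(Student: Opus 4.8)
\emph{Strategy.} The plan is to first obtain existence of weak-$\ast$ cluster points from a uniform mass bound, then show that the increments $\mu_{k+1}-\mu_k$ vanish, and finally pass to the limit in the recursion \eqref{mlem}, the only real work being to justify that passage.

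\emph{Existence and vanishing increments.} By \eqref{eq:constantmass} and the lower bound \eqref{Positive}, every iterate with $k\geq 1$ satisfies $\mu_k(K)\leq n/c$, so $\{\mu_k\}_{k\geq 1}$ lies in a fixed weak-$\ast$ closed ball of $\MM_+$, which is weak-$\ast$ compact by Banach--Alaoglu and metrizable since $\Cont$ is separable; hence cluster points exist and we may fix a subsequence $\mu_{k_j}\to\bar\mu$ weak-$\ast$. The function $\loss$ is bounded below (its infimum is attained), and by \autoref{prop:lossdecrease} the sequence $\loss(\mu_k)$ is non-increasing, so it converges and $\loss(\mu_k)-\loss(\mu_{k+1})\to 0$. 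Combining \autoref{prop:lossdecrease} with \autoref{prop:surrogate}-\ref{surr:it:surrit},
\[ 0\leq D(f\mu_{k+1}|| f\mu_k) = Q_k(\mu_k)-Q_k(\mu_{k+1}) \leq \loss(\mu_k)-\loss(\mu_{k+1}) \longrightarrow 0. \]
Since $f\mu_k$ and $f\mu_{k+1}$ both have total mass $n$ by \eqref{eq:constantmass}, Pinsker's inequality gives $\|f\mu_{k+1}-f\mu_k\|_{TV}\to 0$, and then $\|\mu_{k+1}-\mu_k\|_{TV}\leq \tfrac1c\|f\mu_{k+1}-f\mu_k\|_{TV}\to 0$ using \eqref{Positive}. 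In particular $\mu_{k_j+1}\to\bar\mu$ weak-$\ast$ as well.

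\emph{Passing to the limit.} The functional $\loss$ is weak-$\ast$ lower semicontinuous: $\mu\mapsto\scl{\mu}{f}$ is weak-$\ast$ continuous and each $\mu\mapsto-\log\scl{\mu}{\gamma}$ is weak-$\ast$ lsc (composition of the weak-$\ast$ continuous map $\mu\mapsto\scl{\mu}{\gamma}$ with the lsc function $t\mapsto-\log t$, extended by $+\infty$ for $t\leq 0$). Hence $\loss(\bar\mu)\leq\liminf_j\loss(\mu_{k_j})<+\infty$, so $\bar\mu\in\operatorname{dom}(\loss)$ and $\scl{\bar\mu}{\gamma}>0$ for every $\gamma\in\Gamma$. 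Now fix $\varphi\in\Cont$ and test \eqref{mlem}:
\[ \scl{\mu_{k_j+1}}{\varphi} = \pairing[\Big]{\mu_{k_j}}{\ \frac{\varphi}{f}\sum_{\gamma\in\Gamma}\frac{\gamma}{\scl{\mu_{k_j}}{\gamma}}\ }. \]
The continuous functions $g_j:=\frac{\varphi}{f}\sum_{\gamma\in\Gamma}\frac{\gamma}{\scl{\mu_{k_j}}{\gamma}}$ converge \emph{uniformly} to $g:=\frac{\varphi}{f}\sum_{\gamma\in\Gamma}\frac{\gamma}{\scl{\bar\mu}{\gamma}}$, because $\Gamma$ is finite, $f\geq c>0$, and $\scl{\mu_{k_j}}{\gamma}\to\scl{\bar\mu}{\gamma}>0$. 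Together with $\sup_j\mu_{k_j}(K)<\infty$ and $\mu_{k_j}\to\bar\mu$ weak-$\ast$, this yields $\scl{\mu_{k_j}}{g_j}\to\scl{\bar\mu}{g}$, while the left-hand side tends to $\scl{\bar\mu}{\varphi}$. Thus $\scl{\bar\mu}{\varphi}=\scl{\tfrac{\bar\mu}{f}\sum_{\gamma\in\Gamma}\tfrac{\gamma}{\scl{\bar\mu}{\gamma}}}{\varphi}$ for every $\varphi\in\Cont$, which is exactly the claimed fixed-point identity.

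\emph{Main obstacle.} The delicate step is the passage to the limit: the iteration map is not weak-$\ast$ continuous in an obvious way, and controlling the scalar factors $1/\scl{\mu_{k_j}}{\gamma}$ requires knowing beforehand that $\bar\mu\in\operatorname{dom}(\loss)$ — which is precisely where weak-$\ast$ lower semicontinuity of $\loss$, together with the uniform mass bound coming from \eqref{Positive} and \eqref{eq:constantmass}, is needed. Everything else is careful bookkeeping between uniform and weak-$\ast$ convergence.
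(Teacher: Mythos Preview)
Your proof is correct and follows essentially the same strategy as the paper's: compactness from the uniform mass bound, membership of the cluster point in $\operatorname{dom}(\loss)$ via lower semicontinuity, and use of \autoref{prop:surrogate}\ref{surr:it:surrit} together with \autoref{prop:lossdecrease} to force $D(f\mu_{k+1}\Vert f\mu_k)\to 0$. The paper defers the last step (showing that cluster points are fixed points) to \cite[Proposition~4.3]{Verdier2020}; your argument via Pinsker's inequality to obtain $\Vert\mu_{k+1}-\mu_k\Vert_{TV}\to 0$, followed by a direct passage to the limit in the recursion using uniform convergence of the test functions, is a clean self-contained version of that citation.
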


\begin{proof}
  We first prove that the sequence is weak-$\ast$ compact in $\MM_+$.
  Integrating the defining relation of ML-EM \eqref{mlem}
 and using the assumption \eqref{Positive} that $f \geq c > 0$, we indeed find 
\begin{align}
\label{compact}
\mu_{k+1}(K) = \int_K \, \dd \mu_{k+1} \leq \frac{1}{c}  \sum_{\gamma\in\Gamma} \frac{\scl{\mu_k}{\gamma}}{\scl{\mu_k}{\gamma}}  = \frac{n}{c} 
  .
\end{align}
From the Banach--Alaoglu theorem~\cite{Rudin1991}, we extract a weak-$\ast$ converging subsequence to a given $\bar \mu$, and we denote the subsequence $\set{\mu_{\varphi(k)}}_{k\in\NN}$. 

We also note that $\loss$ is weak-$\ast$ continuous.
In particular, we have $\bar \mu \in \operatorname{dom}(\loss)$ since otherwise $\set{\loss(\mu_{\varphi(k)})}_{k\in\NN}$ would diverge to $+\infty$, in contradiction with its monotonicity. 

Using \autoref{prop:lossdecrease}
and \autoref{prop:surrogate}~\ref{surr:it:surrit}, we can then follow the lines of~\cite[Proposition 4.3]{Verdier2020} to conclude the proof.
\end{proof}

We refer to absolutely continuous measures for those that are absolutely continuous with respect to the Lebesgue measure on $K$, and proceed with a further assumption on the regularity of minimisers: 
\begin{equation}
\label{regularity}
\text{there exists $\mu^\star \in \argmin_{\mu \in \MM_+} \loss(\mu)$ absolutely continuous with $  \operatorname{supp}(\mu^\star)= \widetilde K$.}
\end{equation}

\begin{remark}
  Using the results in~\cite{Georgiou2005}, one typically expects the existence of such absolutely continuous measures in the low noise regime~\cite{Verdier2020}.
  More precisely, in the static case and if $y = (n_1, \ldots, n_\ndet)$ is in the interior of the cone $\setc{A \mu}{\mu \in \MM_+}$ in~$\mathbb{R}^\ndet$, such measures do exist~\cite{Verdier2020}.
\end{remark}

\begin{theorem}
Assume that assumption~\eqref{regularity} holds. Then, for any $\mu_0 \in \operatorname{dom}(\loss)$ absolutely continuous with a continuous and positive density over $K$, the algorithm is convergent in the sense
that \[\loss(\mu_k)\xrightarrow[k\rightarrow +\infty]{} \min_{\mu \in \MM_+} \loss(\mu).\]
Furthermore, any weak-$\ast$ limit point $\bar \mu$ of the algorithm satisfies $\operatorname{supp}(\bar \mu) = \widetilde K$.
\end{theorem}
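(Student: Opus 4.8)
The plan is to adapt the Lyapunov-functional argument of~\cite{Verdier2020}: monitor the Kullback--Leibler distance $D(f\mu^\star || f\mu_k)$ from the iterates to a fixed regular minimiser $\mu^\star$, show it is non-increasing, and read off loss convergence from its decrements. Fix, by~\eqref{regularity}, an absolutely continuous minimiser $\mu^\star$ with $\operatorname{supp}(\mu^\star) = \widetilde K$. Three properties of $\mu^\star$ will be used. First, $\scl{\mu^\star}{f} = n$: this is the identity $\scl{\mu^\star}{\nabla\loss(\mu^\star)} = 0$ already used to show that the infimum of $\loss$ is attained, together with~\eqref{eq:nablaloss}. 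Second, $\mu^\star$ is a fixed point of~\eqref{mlem}, equivalently $f\mu^\star = \sum_{\gamma\in\Gamma}\nu_\gamma(\mu^\star)$: indeed~\eqref{kkt} gives $f = \sum_{\gamma\in\Gamma}\gamma/\scl{\mu^\star}{\gamma}$ on $\operatorname{supp}(\mu^\star) = \widetilde K$, and multiplying by $\mu^\star$ yields the claim. Third, $\mu^\star \ll \mu_k$ for all $k \geq 1$ and $D(f\mu^\star || f\mu_1) < +\infty$; it is here that the hypotheses that $\mu_0$ has a continuous positive density and that $\mu^\star$ is regular enter, by propagating absolute continuity through the recursion~\eqref{mlem}.

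The heart of the argument is the one-step estimate
\[
  D(f\mu^\star || f\mu_{k+1}) \;\leq\; D(f\mu^\star || f\mu_k) - \bigl(\loss(\mu_k) - \loss(\mu^\star)\bigr), \qquad k \geq 1.
\]
To prove it I would use~\eqref{eq:mlemnu} in the form $f\mu_{k+1} = \sum_{\gamma\in\Gamma}\nu_\gamma(\mu_k)$ together with $f\mu^\star = \sum_{\gamma\in\Gamma}\nu_\gamma(\mu^\star)$; joint convexity of the divergence (the log-sum inequality) then bounds $D(f\mu^\star || f\mu_{k+1})$ above by $\sum_{\gamma\in\Gamma} D(\nu_\gamma(\mu^\star) || \nu_\gamma(\mu_k))$. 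Writing $h_k = \dd\mu^\star/\dd\mu_k$ and noting that each $\nu_\gamma(\mu^\star)$ is a probability measure with $\dd\nu_\gamma(\mu^\star)/\dd\nu_\gamma(\mu_k) = (\scl{\mu_k}{\gamma}/\scl{\mu^\star}{\gamma})\,h_k$, each summand equals $\log(\scl{\mu_k}{\gamma}/\scl{\mu^\star}{\gamma}) + \int \log h_k\,\dd\nu_\gamma(\mu^\star)$. Summing over $\gamma$, the integral terms collapse (via $\sum_{\gamma\in\Gamma}\nu_\gamma(\mu^\star) = f\mu^\star$) to $D(f\mu^\star || f\mu_k)$, while, since $\loss(\mu) = \scl{\mu}{f} - \sum_{\gamma\in\Gamma}\log\scl{\mu}{\gamma}$ and $\scl{\mu_k}{f} = \scl{\mu^\star}{f} = n$ by~\eqref{eq:constantmass} and the first property, the logarithmic terms sum to $\loss(\mu^\star) - \loss(\mu_k)$. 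The estimate follows.

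Since $\loss(\mu_k) - \loss(\mu^\star) \geq 0$, the estimate shows $k \mapsto D(f\mu^\star || f\mu_k)$ is non-increasing; telescoping from $k = 1$ gives $\sum_{k\geq1}(\loss(\mu_k) - \loss(\mu^\star)) \leq D(f\mu^\star || f\mu_1) < +\infty$. Together with the monotonicity of $\loss(\mu_k)$ from~\autoref{prop:lossdecrease}, this forces $\loss(\mu_k) \to \loss(\mu^\star) = \min_{\mu \in \MM_+} \loss(\mu)$, which is the first claim. For the support statement, recall from~\autoref{fixed} that $\{\mu_k\}_{k\in\NN}$ is weak-$\ast$ relatively compact and that every cluster point $\bar\mu$ lies in $\operatorname{dom}(\loss)$ and is a fixed point of~\eqref{mlem}. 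Along a subsequence converging to $\bar\mu$, weak-$\ast$ continuity of $\loss$ gives $\loss(\bar\mu) = \lim_k \loss(\mu_k) = \min_{\mu \in \MM_+} \loss(\mu)$, so $\bar\mu$ is a minimiser, whence $\operatorname{supp}(\bar\mu) \subseteq \widetilde K$ by~\eqref{kkt}. For the reverse inclusion I would use that $\mu \mapsto D(f\mu^\star || f\mu)$ is weak-$\ast$ lower semicontinuous while $D(f\mu^\star || f\mu_k) \leq D(f\mu^\star || f\mu_1) < +\infty$ along the whole sequence; passing to the subsequence converging to $\bar\mu$ yields $D(f\mu^\star || f\bar\mu) < +\infty$, hence $f\mu^\star \ll f\bar\mu$, hence $\mu^\star \ll \bar\mu$ because $f \geq c > 0$ by~\eqref{Positive}, and therefore $\widetilde K = \operatorname{supp}(\mu^\star) \subseteq \operatorname{supp}(\bar\mu)$. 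Combining the two inclusions gives $\operatorname{supp}(\bar\mu) = \widetilde K$.

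I expect the main obstacle to be the one-step estimate: the functional $D(f\mu^\star || f\mu_k)$ is the right quantity to monitor, but its monotonicity is not visible from the majorisation--minimisation picture of~\autoref{prop:surrogate} alone, and hinges on combining joint convexity of the divergence with the algebraic identity $\sum_{\gamma\in\Gamma}\log\scl{\mu}{\gamma} = \scl{\mu}{f} - \loss(\mu)$. The remaining, more routine difficulty is the measure-theoretic bookkeeping behind $\mu^\star \ll \mu_k$ and $D(f\mu^\star || f\mu_1) < +\infty$, which is precisely where the regularity hypotheses on $\mu_0$ and on $\mu^\star$ are consumed.
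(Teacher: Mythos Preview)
Your argument is correct and shares the paper's Lyapunov ingredient $D(f\mu^\star\,\|\,f\mu_k)$, but the logical route differs in a meaningful way. The paper only establishes the bare monotonicity $D(f\mu^\star\,\|\,f\mu_{k+1}) \leq D(f\mu^\star\,\|\,f\mu_k)$ (deferring the argument to~\cite[Proposition~4.7]{Verdier2020}), then uses lower semicontinuity of the divergence to obtain $\operatorname{supp}(\bar\mu)=\widetilde K$ \emph{first}, and only afterwards deduces optimality of $\bar\mu$ by checking the KKT conditions~\eqref{kkt} directly from the fixed-point identity $\bar\mu\,\nabla\loss(\bar\mu)=0$ together with the support information; loss convergence then follows from optimality of cluster points and monotonicity. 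You instead prove the sharper estimate
\[
  D(f\mu^\star\,\|\,f\mu_{k+1}) \leq D(f\mu^\star\,\|\,f\mu_k) - \bigl(\loss(\mu_k)-\loss(\mu^\star)\bigr),
\]
which yields loss convergence immediately by telescoping (and in fact the stronger summability $\sum_k(\loss(\mu_k)-\loss(\mu^\star))<\infty$); optimality of $\bar\mu$ then comes for free from weak-$\ast$ continuity of $\loss$, with no need to revisit~\eqref{kkt}. Your approach is thus more self-contained and gives a quantitative byproduct, while the paper's KKT-verification route trades the explicit log-sum computation for a cleaner structural argument once the support is known. Both rely identically on the regularity hypotheses to secure $D(f\mu^\star\,\|\,f\mu_1)<\infty$ and on lower semicontinuity of the divergence for the inclusion $\widetilde K\subseteq\operatorname{supp}(\bar\mu)$.
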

\begin{proof}
We argue in several steps, letting $\bar \mu$ be a cluster point of the ML-EM iterates~$\set{\mu_k}_{k \in \NN}$.

\begin{enumerate}
  \item
The assumptions on $\mu_0$ ensure that $\mu_1$ is absolutely continuous, with  $\operatorname{supp}(\mu_1) = \widetilde K$, whence $D(f \mu^\star || f \mu_1) < +\infty$, where $\mu^\star$ is defined by~\eqref{regularity}. We can then prove similarly to~\cite[Proposition 4.7]{Verdier2020} that for all $k \geq 1$,
\[D(f \mu^\star || f \mu_{k+1}) \leq D(f \mu^\star || f \mu_{k}).\]
Taking a subsequence along which $\set{\mu_k}_{k \in \NN}$ converges weakly-$\ast$ to $\bar \mu$ and using the weak-$\ast$ lower semicontinuity of the Kullback--Leibler divergence~\cite{Posner1975}, we find \[D(f \mu^\star || f \bar \mu) < \infty.\] This shows that $f \mu^\star \ll f \bar \mu$, whence $\operatorname{supp}(\bar \mu) = \widetilde K$ thanks to the positivity of~$f$.

\item
We now make us of the fact that $\bar \mu$ must also be a fixed point of the algorithm, by virtue of~\autoref{fixed}. 
In other words, we have 
\[\bar \mu=\frac{\bar \mu}{ f} \sum_{\gamma\in\Gamma} \frac{\gamma}{ \scl{\bar \mu}{\gamma}} \Longleftrightarrow \bar \mu \, \nabla \loss(\bar \mu) = 0.\]
This implies 
$\nabla \loss(\bar \mu) = 0$ on $\operatorname{supp}(\bar \mu) $.
Outside of $\operatorname{supp}(\bar \mu) = \widetilde K$,
\[
  \nabla \loss(\bar \mu) = f - \sum_{\gamma\in\Gamma}\frac{\gamma}{ \scl{\bar \mu}{\gamma}} \geq 0
  ,
\]
since the right-hand side vanishes by definition of $\widetilde K$, whereas $f$ is positive.  Hence, $\bar \mu$ satisfies both optimality conditions~\eqref{kkt}, which shows that $\bar \mu$ is optimal and hence the claim that any cluster point is optimal.

\item
For the convergence of $\set{\loss(\mu_k)}_{k \in \NN}$ towards the minimum, we just recall that the sequence is non-increasing from~\autoref{prop:lossdecrease}, hence its limit must coincide with $\loss(\bar \mu)$ for any cluster point $\bar \mu$. The optimality of such cluster points concludes the proof.
\end{enumerate}

\end{proof}
The interest of emphasising the property $\operatorname{supp}(\bar \mu) = \widetilde K$ for cluster points is that this prevents them from being sparse measures.

\section{Numerical simulations}
\label{sec4}

All simulations are run in Python and use the Operator Discretization Library (\texttt{odl}) for manipulating operators~\cite{Adler2017}, \texttt{neuron} for warping utilities~\cite{Dalca2018}, which itself uses \texttt{tensorflow}~\cite{Abadi2016}.

\begin{algorithm}
  \caption{Pseudo-code for the ML-EM algorithm~\eqref{mlem}.}
  \label{alg:mainalgorithm}
  \begin{algorithmic}
    \State Compute $f :=  \int_0^1 \mathcal{W}_t^* A^* \one\, \dd t$.
    \State Choose initial $\mu_0$ and the number of iterates $k^\star$.
\Require $\mathbf{t} = (t_{i}^j)_{j = 1, \ldots, n_i, \, i = 1, \ldots, \ndet}$     \Comment{Times of detection}
    \For{$k \gets 0,\dotsc k^\star-1$}
         \State $a \gets 0$
    \For {$t_i^j \in \mathbf{t}$}
     \State $\gamma \gets \mathcal W_{t_i^j}^\ast a_i $ \Comment{These values could be stored instead}
     \State $a \gets a+ \frac{\gamma}{\langle \mu_k, \gamma \rangle}$ 
   \EndFor \Comment{Computes $\sum_{\gamma \in \Gamma} \frac{\gamma}{\langle \mu_k, \gamma \rangle}$}
    \State $\mu_{k+1}   \gets \frac{\mu_k}{f} a$
    \EndFor
    \State \textbf{return} $\mu_{k^\star}$
  \end{algorithmic}
\end{algorithm}


\subsection{General approach}
We work with the Derenzo phantom, displayed in the Introduction, see~\autoref{translation_evolution}. The phantom is explicitly defined by a function (hence, in continuum) and is subsequently discretised. 

The noise level is controlled by the dose (or time) by which we multiply the phantom.
In all experiments we have run, we have multiplied the Derenzo phantom by $10$. 
We denote it~$\mu_r$.

Given a time-evolution of the template through operators $\mathcal W_t$, we first generate the number of points and times per detector, using the result established in \autoref{Inhomogeneous}.
The latter states that they are for each $i \in \set{1, \ldots ,\ndet}$ independently drawn according to an inhomogeneous Poisson point process over $[0,1]$, with intensity defined by
\[
  t \in [0,1] \mapsto \beta_i(t) = \int_K a_i \, \dd (\mathcal{W}_t \mu_r) = \int_K (\mathcal{W}_t^\ast a_i) \,\dd \mu_r
  .
\]
 These processes are simulated using the rejection method, which goes as follows 
 \begin{itemize}
\item find a bound $M_i$ such that $\beta_i \leq M_i$ on $[0,1]$,
\item simulate a homogeneous Poisson process of intensity $M_i$ on $[0,1]$, 
\item accept a drawn time $t_j^i$ with probability $\beta_i(t_j^i)/M_i$.
\end{itemize}

 In each of the test cases presented below, we also compute the total number of points detected by detector $n_i$, $ i \in \set{1, \dotsc ,\ndet}$, thus obtaining a sinogram, which we call \emph{aggregated data}.
 Data can be aggregated on the whole interval of time $[0,1]$ or on a small portion of it to curb the effect of movement.
 This is what is commonly done in modern scanners to alleviate blur coming from movement of organs, such as the lungs. 

Finally, we will also compute a fictitious sinogram data, as obtained from the static phantom acquired for the same amount of time.

All in all, this gives us three benchmarks again which we may test our results for a given fixed number of iterates $k^\star$: 
\begin{itemize}
\item $k^\star$ iterates of the classical ML-EM algorithm on data acquired from the static phantom.
\item $k^\star$ iterates of the classical ML-EM algorithm on the aggregated data on the whole interval $[0,1]$.
\item $k^\star$ iterates of the classical ML-EM algorithm on the aggregated data on a relevant subinterval of $[0,1]$.
\end{itemize}

We expect our algorithm to perform as well as the classical ML-EM algorithm on a static phantom, thus avoiding both the blur observed when aggregating data on the whole time-interval because of movement, and the higher noise observed when only a portion of the aggregated data is kept.


In both cases, we work with a 2D PET operator $A$ with $45$ angles (views) and $64$ tangential positions, and the image space is the square $[-20,20]^2$ with resolution $128 \times 128$.
\subsection{Translation}
\label{sec:translation}


The image $\mu_t$ is given as the evolution of $\mu_r$ through operators $\mathcal W_t$ defined by means of translations.
We choose a mapping $c : [0,1] \mapsto \RR$ for a speed of translation, the operators are correspondingly defined through their adjoint for functions $f$ by
\[x \mapsto \mathcal W_t^\ast f(x) =f(x+c(t)). \]
In the experiment 
 the image is translated from left to right at speed $t$, up until it is at the center of the image, at $t =\frac{3}{4}$. The translation then stops.
 In other words, \[c(t) = (c_1(t), 0), \quad c_1(t) = (a t +b) \mathbbm{1}_{[0,\frac{3}{4}]} (t), \quad t \in [0,1]\] for appropriately chosen constants $a$ and $b$.
The resulting evolving image is depicted in \autoref{translation_evolution}.

\autoref{translation_comp} shows the result of $10$ iterates of our algorithm on the times $t_i^j$, compared to $10$ iterates of the classical ML-EM algorithm as obtained either on a fictitious static case, or on partially or fully aggregated data. The partially aggregated data is taken from the time-inverval $[\frac{3}{4}, 1]$, namely when the translation has stopped.

As expected, the result is almost undistinguishable from the static case, whereas the classical ML-EM algorithm on the full aggregated data leads to a poor image due to the movement.
The same applied to partially aggregated data performs well since the image is static on the last portion of the time-window, but $3/4$th of the data is unused, resulting in a noisier image.
The small circles towards the center are indeed less easily distinguished in the case \subref{trans:partial} as they can be in cases \subref{trans:proposed} and \subref{trans:classical}.

We observe a lower maximum for image \subref{trans:aggregated} than for images \subref{trans:proposed} and \subref{trans:classical}.
This is to be expected, as classical ML-EM iterates are such that $\scl{\mu_k}{A^\ast \one} =n$, whereas it is $\scl{\mu_k}{f} = n$ (see \eqref{eq:constantmass}) for our algorithm.
Since the function $f$ satisfies $f \leq A^\ast \one$ in this translating situation, maxima must indeed differ.

Finally, we stress that we have scaled the last image \subref{trans:partial} by a multiplication of~$4$ to make the comparison with \subref{trans:proposed} and \subref{trans:classical} easier, since there is only $1/4$th of the data.

\subsection{Mass-preserving action of diffemorphisms}


The set of images $(\mu_t)_{0 \leq t \leq 1}$ is given as the evolution of $\mu_r$ through operators $\mathcal W_t$ defined by means of diffeomorphisms $\varphi_t$ and the mass-preserving action, {i.e.}, for all $f\in \mathcal{C}$, 
\begin{equation}
\label{mass_preserving}
\mathcal W_t^\ast f(x) = f(\varphi_t(x)), \; x \in K.
\end{equation}
Now, if $\mu$ is absolutely continuous with density $g$ with respect to the Lebesgue measure, we find
\[\scl{\mathcal{W} \mu}{f} = \int_E f(\varphi(x)) g(x) \, \dd x =  \int_E  f(x)  |D\varphi^{-1}(x)| g(\varphi^{-1}(x)) \, \dd x,\]
which shows that $\mathcal{W} \mu$ is absolutely continuous with density $x \mapsto |D\varphi^{-1}(x)| g(\varphi^{-1}(x))$ with respect to the Lebesgue measure.

 The diffeomorphisms are obtained by integration of a stationary vector field, as $\varphi_t = \exp(t  v)$. In other words, they are obtained by integrating the following Cauchy problem over $[0,1]$: 
\begin{equation}
\label{exp}
  \begin{cases}
    \partial_t \varphi_t(x)=v(\varphi_t(x)),\\
    \varphi_0 =\mathrm{Id}.
  \end{cases}
\end{equation}
The resulting evolving image is depicted in \autoref{diffeo_evolution}.

\begin{figure}
\begin{subfigure}{0.32\textwidth}
\includegraphics[width=\linewidth]{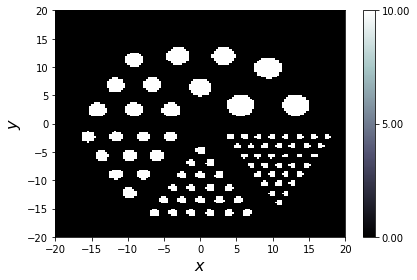} 
\caption{$t = 0$} 
\end{subfigure} 
\hfill
\begin{subfigure}{0.32\textwidth}
\includegraphics[width=\linewidth]{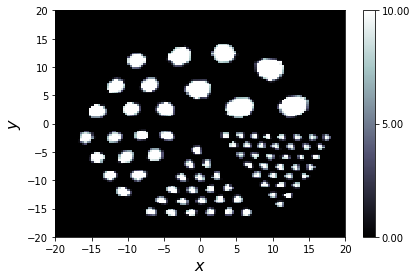} 
\caption{$t = 0.2$} 
\end{subfigure} 
\hfill
\begin{subfigure}{0.32\textwidth}
\includegraphics[width=\linewidth]{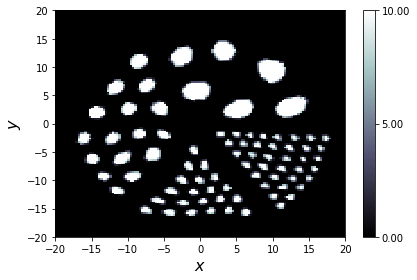} 
\caption{$t = 0.4$} 
\end{subfigure}

\bigskip

\begin{subfigure}{0.32\textwidth}
\includegraphics[width=\linewidth]{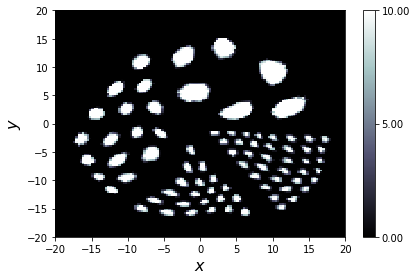} 
\caption{$t = 0.6$} 
\end{subfigure} 
\hfill
\begin{subfigure}{0.32\textwidth}
\includegraphics[width=\linewidth]{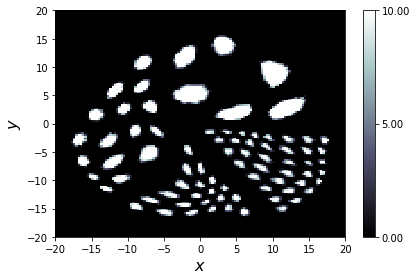} 
\caption{$t = 0.8$}
\end{subfigure} 
\hfill
\begin{subfigure}{0.32\textwidth}
\includegraphics[width=\linewidth]{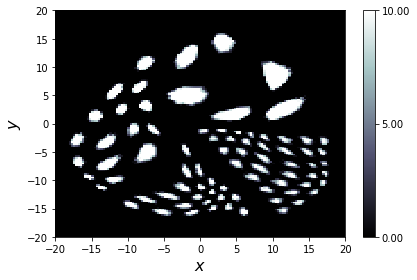} 
\caption{$t = 1$} 
\end{subfigure} 
\caption{Evolution of a template with mass-preserving operators.} \label{diffeo_evolution}
\end{figure}

\autoref{diffeo_comp} presents the results after ten iterations of the proposed algorithm.
 Again, the results of the classical ML-EM algorithm on the static phantom after the same number of iterations look very similar.

However, the reconstruction obtained on the full aggregated data is unsurprisingly blurred, as all the positions are ``averaged'' in the reconstruction.
The last reconstruction obtained from aggregating data on $[0,\frac{1}{4}]$ exhibits less blur but more noise.

\begin{figure}

\begin{subfigure}{0.49\textwidth}
\includegraphics[width=\linewidth]{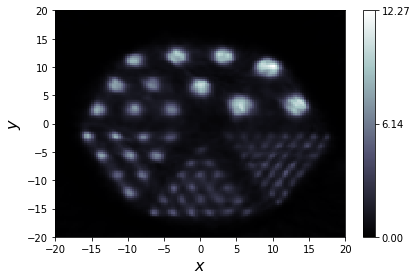} 
\caption{Proposed ML-EM algorithm} 
\end{subfigure} 
\hfill
\begin{subfigure}{0.49\textwidth}
\includegraphics[width=\linewidth]{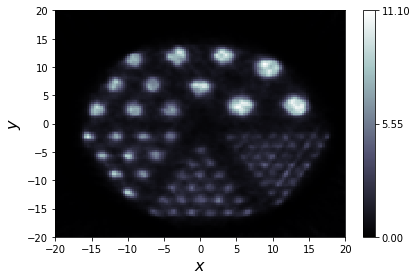} 
\caption{Classical ML-EM on static data} 
\end{subfigure} 

\bigskip
\begin{subfigure}{0.49\textwidth}
\includegraphics[width=\linewidth]{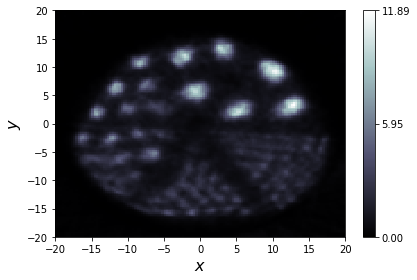} 
\caption{Classical ML-EM on full aggregated data \quad} \label{fig:comp:agg}
\end{subfigure} 
\hfill
\begin{subfigure}{0.49\textwidth}
\includegraphics[width=\linewidth]{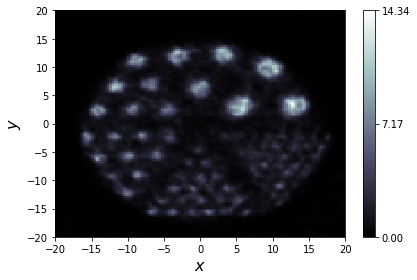} 
\caption{Classical ML-EM on partial aggregated data on $[0,\frac{1}{4}]$, scaled} 
\end{subfigure} 
\caption{$10$th iterate of the proposed algorithm \eqref{mlem} compared to the $10$th iterate of the classical ML-EM algorithm in various cases.} \label{diffeo_comp}
\label{fig:comparison}
\end{figure}

\subsection{Investigating the effect of a wrong motion model}

Finally, we provide an experiment mimicking the case where the available motion model would be slightly wrong, such as what would happen if there were measurement errors or if the motion model were estimated jointly with the image. We work with the previous example and encode the error in the noise model through a wrong vector field. 

In other words, we are given a wrong vector field $v_\delta$ with $\|{v-v_\delta}\|_\infty$ small. This vector field generates wrong diffeomorphisms $\varphi_\delta$ defined by~\eqref{exp} with $v_\delta$ instead of $v$. These in turn generate a wrong motion model with operators $(W_\delta)_t$ by the mass-preserving action~\eqref{mass_preserving}.

In~\autoref{wrong_diffeo}, the template both with the correct vector field $v$ and with the wrong vector field $v_\delta$ is depicted at time $t=1$.
We then compare in the same Figure the effect of $10$ ML-EM iterates~\eqref{mlem} both with the correct motion model and the wrong one, together with $10$ classical ML-EM iterates on the full aggregated data.

The effect of the wrong vector field is clearly visible but the main features of the Derenzo phantom may still be identified. As a result, this stands as numerical evidence of a stability property of ML-EM iterates~\eqref{mlem} with respect to an error in the motion model, at least when it is encoded through the vector field defining diffemorphisms underlying the mass preserving motion model.

A visual comparison with the results presented in~\autoref{diffeo_comp} also shows that one might prefer keeping only a portion of the data corresponding to a phase with negligible movement if the movement model is known with too much uncertainty. Hence, the ML-EM iterates~\eqref{mlem} yield an actual improvement if either the movement model is known with enough precision or if one has to restrict to a very small portion of data to neglect movement, leading to too noisy results.

\section{Discussion and perspectives}
\label{sec5}

\subsection{Possible generalisations}

We discuss how the statistical framework we have developed can be extended to more realistic physical models for emission tomography. 

One extension is to take into account the attenuation map, usually given by a CT-scan.
We assume that the CT-scan has been aligned with the PET scan at a given fixed time.
We thus have to replace the operator \(A\) by a time-dependent one.
It means that the functions $a_i$ time-dependent, that is, $a_i(t)$.
This in turns defines the time-dependent operator $A(t)$ acting on measures by \(A_i(t) \mu:= \int_K a_i(t) \, d\mu\).

Another improvement is time-of-flight PET, which takes advantage of the detection time differences for two opposite detectors.
The system matrix in a continuous time setting is certainly more involved, but it should be possible to use the standard time-of-flight operators consisting of fixed portions of lines (``time of flight bins'') instead.
Since we make no particular assumption on the operator \(A\), our algorithm can then be used without further modification.

\subsection{Relations to ML-EM algorithms for gated data}
As explained both for the maximum likelihood problem and the ML-EM algorithm, our approach reduces to ML-EM in the gated data case.
An interesting question is to consider if and how our algorithm can then be rigorously derived. 

A first formal approach would be to consider that there are as many gates as there are detection times, in which case the ML-EM algorithm for gated data~\eqref{eq:piecewiseconstant} is very close to our algorithm~\eqref{mlem}.
However, the denominator would then be wrong: it is a (random) Riemann sum approximation of the function $f$.
Moreover, the precise relation between the two requires further analysis as the waiting times between two times of detection are exponentially-distributed random variables.

Another (a posteriori) approach is to consider $\Delta t$ small enough so that each time-interval of size $\Delta t$ as at most one detection point.
Then again, the algorithm~\eqref{eq:piecewiseconstant} does not exactly lead to~\eqref{mlem} because the denominator \(f\) is off due to the time discretisation.

Note however that letting \(\Delta t\) go to zero is certainly possible to obtain our algorithm, but ripe with technical difficulties: in which sense is the convergence, at which \(\Delta t\) converges to zero compared to data acquisition, etc.
Our approach avoids these difficulties by providing instead a comprehensive modelling framework.
This model directly considers movement as continuous in time rather than passing to the limit from piecewise-constant movement models.

\subsection{Numerical implementation}
One reason behind the popularity of the ML-EM algorithm (or its more efficient counterpart OSEM)
in modern PET scanners 
is its computational tractability.
Hence, the effort made in the present work to derive an algorithm with comparable computational burden requires that the computation of motion operators be at most of the order of that of the PET operator.

As a close inspection of our algorithm shows, $f$ should be computed offline, just as $A^T 1$ is for the classical ML-EM iterate.
Given the times $t_i^j$, we note that the elements of $\Gamma$ (namely $\mathcal{W}_{t}^\ast a_i$ for all times $t_i^j$, $i = 1, \ldots, \ndet, \; j  = 1, \ldots, n_i$) can also be computed  offline. Once this is done, the iterates do not require any further use of the PET operator $A$. 
This is of course possible only with few detection points, since each element of $\Gamma$ has the size of one image.

We also emphasise that working with list-mode data as we do allows to start reconstructing even with very few time points, adding newly detected points along further iterates of the algorithm. 

\subsection{From continuum to discrete}
The framework and algorithm we propose are continuous, but the resulting algorithm is obviously used on discrete images, using a corresponding discrete operator. 
The seminal paper~\cite{Shepp1982} starts from continuous images (measures for the Poisson processes) and directly discretises without explicitly describing the discrepancies between the discrete and the continuous model, nor how they translate at the level of the ML-EM algorithm.

To the best of our knowledge, the discretisation error for PET reconstruction, even for the common ML-EM algorithm, has never been investigated.
For ill-posed problems, it is well-known that discretisation can have regularising properties; one could expect to observe the same behaviour and we believe any attempt at quantifying this effect would be valuable.

\subsection{Joint estimation of the motion model and the template}
As briefly mentioned in the introduction, one natural extension is the unknown movement model case.


Let us assume that the movement is parameterised by some element $v$ of a suitably chosen space $X$ (such as when the motion operators are defined through diffemorphisms themselves obtained by integration of a vector field).
We obtain an optimisation problem of the form
\[ \min_{\mu \in \MM_+(K), \, v \in X} \qquad \loss(\mu, v) + \alpha \mathcal{R}(v),\]
with $l(\mu,v) = \pairing{\mu}{f(v)} - \sum_{\gamma\in\Gamma(v)} \log(\scl{\mu}{\gamma})$: the dependence in $v$ is in $f$ and $\Gamma$.

The second term $\mathcal{R}(v)$ is a regularisation term acting on the movement model parameter, and $\alpha>0$ is a regularisation parameter.
Note in this case we incorporate the motion model as a hard constraint since the optimisation problem is posed directly on the template at time zero.

Solving the optimisation problem is usually done by alternating minimisation, namely by iteratively optimising with respect to $\mu \in \MM_+(K)$ and $v \in X$, leading to an outer loop with sequences of estimates $(\mu_p, v_p)_{p \in \mathbb{N}}$. 
Given a current estimate $v_p$ for $v \in X$, the next estimate for $\mu_{p+1}$ is obtained by solving $\min_{\mu \in \MM_+(K)} \loss(\mu, v_p)$, which can precisely be done by our algorithm through an inner loop.

In this direction, the article~\cite{Burger2018} handles a large class of functionals aimed at joint motion estimation and image reconstruction. Unfortunately, the functional we are dealing with does not fit in this framework. Although some of the mathematical techniques presented in the latter reference will prove useful, efficiently solving the joint problem in our case requires ad hoc methods and is the subject of a future work. 


\begin{figure}
 \captionsetup{size=small}

\begin{subfigure}{0.49\textwidth}
\includegraphics[width=\linewidth]{Diffeo_t1.png} 
\caption{Template at $t=1$, true motion model} 
\end{subfigure} 
\begin{subfigure}{0.49\textwidth}
\includegraphics[width=\linewidth]{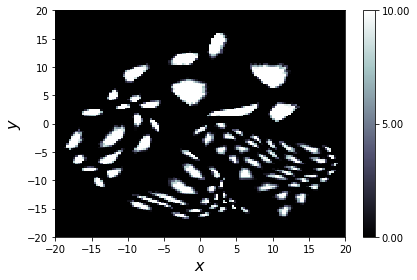} 
\caption{Template at $t=1$, wrong motion model}
\end{subfigure} 
\begin{subfigure}{0.49\textwidth}
\includegraphics[width=\linewidth]{Diffeo_reconst_mlem_agg_10.png} 
\caption{Classical ML-EM on full aggregated data \quad} \label{fig:wrong:agg}
\end{subfigure} 
\begin{subfigure}{0.49\textwidth}
  \includegraphics[width=\linewidth]{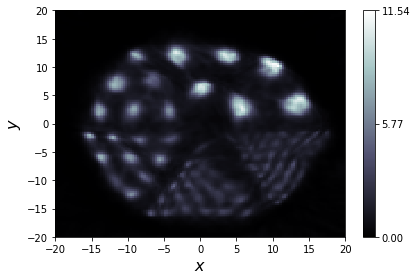} 
  \caption{Proposed ML-EM algorithm, wrong motion model \quad} 
\end{subfigure} 
\begin{subfigure}{0.49\textwidth}
  \includegraphics[width=\linewidth]{Diffeo_reconst_mlem_mov_10.png} 
  \caption{Proposed ML-EM algorithm, true motion model \quad} 
\end{subfigure} 

\caption{Effect of a wrongly-estimated movement model after $10$ iterates of~\eqref{mlem}.
  \subref{fig:wrong:agg} is a duplicate of \autoref{fig:comp:agg} for convenient comparison.
}
\label{wrong_diffeo}

\end{figure}

\bibliography{biblio.bib}
\bibliographystyle{acm}
\end{document}